\newtheorem{theorem}{Theorem}
\theoremstyle{plain}
\newtheorem{corollary}{Corollary}
\newtheorem{definition}{Definition}
\newtheorem{example}{Example}
\newtheorem{lemma}{Lemma}
\newtheorem{proposition}{Proposition}
\newtheorem{remark}{Remark}
\numberwithin{equation}{section}
\begin{document}
\title[Continuity in Vector Metric Spaces]{Continuity in Vector Metric Spaces}
\author{C\"{u}neyt \c{C}ev\.ik}
\address{Department of Mathematics, Faculty of Science, Gazi
University, 06500 Teknikokullar Ankara, Turkey}
\email{ccevik@gazi.edu.tr}
\date{}
\subjclass[2000]{Primary 58C07; Secondary 46A40,47B60}
\keywords{Vectorial continuity, topological continuity, Riesz space, vector metric space}

\begin{abstract}
We introduce vectorial and topological continuities for functions
defined on vector metric spaces and illustrate spaces of such
functions. Also, we describe some fundamental classes of vector
valued functions and extension theorems.
\end{abstract}

\maketitle

\section{Introduction and Preliminaries}

Let $E$ be a Riesz space. 
If every nonempty bounded above (countable) subset of $E$ has a supremum, then
$E$ is called \emph{Dedekind ($\sigma$-)complete}. A subset in $E$ is called \emph{order bounded} if it is bounded both above and below. We write $a_{n}\downarrow a$ if $(a_{n})$ is a decreasing sequence in $E$
such that $\inf a_{n}=a$. The Riesz space $E$ is said to be
\emph{Archimedean} if $n^{-1}a\downarrow 0$ holds for every $a\in E_{+}$. A
sequence $(b_{n})$ is said to be \emph{$o$-convergent} (or \emph{order convergent}) to $b$
if there is a sequence $(a_{n})$ in $E$ such that $a_{n}\downarrow 0$ and $%
\left\vert b_{n}-b\right\vert \leq a_{n}$ for all $n$, where $\left\vert
a\right\vert =a\vee (-a)$ for any $a\in E$. We will denote this order convergence by $b_{n}\overset{o}{\longrightarrow }b$. Furthermore, a sequence $(b_{n})$ is said to
be \emph{$o$-Cauchy} if there exists a sequence $(a_{n})$ in $E$
such that $a_{n}\downarrow 0$ and $\left\vert b_{n}-b_{n+p}\right\vert \leq
a_{n}$ for all $n$ and $p$. The Riesz space $E$ is said to be \emph{$o$-Cauchy complete} if
every $o$-Cauchy sequence is $o$-convergent. \vskip2mm

An operator $T:E\rightarrow F$ between two Riesz spaces is
\emph{positive} if $T(x)\geq 0$ for all $x\geq 0$. The operator
$T$ is said to be \emph{order bounded} if it maps order bounded
subsets of $E$ to order bounded subsets of $F$. The operator $T$
is called \emph{$\sigma$-order continuous} if
$x_{n}\overset{o}\longrightarrow 0$ in $E$ implies
$T(x_{n})\overset{o}\longrightarrow 0$ in $F$. Every
$\sigma$-order continuous operator is order bounded. If $T(x\vee
y)=T(x)\vee T(y)$ for all $x,y\in E$, the operator $T$ is a
\emph{lattice homomorphism}. For further information about Riesz
spaces and the operators on Riesz spaces, we refer to \cite{AB}
and \cite{Zaanen}. \vskip2mm

In \cite{CevikAltun}, a vector metric space is defined with a
distance map having values in a Riesz space. Also in
\cite{CevikAltun} and \cite{AltunCevik}, some results in metric
space theory are generalized to vector metric space theory, and
the Baire Theorem and some fixed point theorems in vector metric
spaces are given. Actually, the study of metric spaces having
value on a vector space has started by Zabrejko in
\cite{Zabrejko}. The distance map in the sense of Zabrejko takes
values from an ordered vector space. We use the structure of
lattice with the vector metrics having values in Riesz spaces,
then we have new results as mentioned above. This paper is on some
concepts and related results about continuity in vector metric
spaces. \vskip2mm

\begin{definition}
Let $X$ be a non-empty set and let $E$ be a Riesz space. The function $d:X\times
X\rightarrow E$ is said to be a vector metric (or $E$-metric) if it satisfies the following properties:

(vm1) $d(x,y)=0$ if and only if $x=y,$

(vm2) $d(x,y)\leq d(x,z)+d(y,z)$ for all $x,y,z\in X.$\\
Also the triple $(X,d,E)$ is said to be vector metric space.
\end{definition}

\begin{definition}
Let $(X,d,E)$ be a vector metric space.\vskip1mm
(a) A sequence $(x_{n})$ in $X$ is vectorial
convergent (or is $E$-convergent) to some $x\in X$, if there is a sequence $(a_{n})$ in $E$ such that $%
a_{n}\downarrow 0$ and $d(x_{n},x)\leq a_{n}$ for all $n$. We will denote this vectorial convergence by $x_{n}\overset{d,E}{\longrightarrow }x$\vskip1mm
(b) A sequence $(x_{n})$ in $X$ is called $E$-Cauchy sequence whenever there exists
a sequence $(a_{n})$ in $E$ such that $a_{n}\downarrow 0$ and $%
d(x_{n},x_{n+p})\leq a_{n}$ for all $n$ and $p$.\vskip1mm
(c) The vector metric space $X$ is called $E$-complete if each $E$-Cauchy
sequence in $X$ is $E$-convergent to a limit in $X$.
\end{definition}

One of the main goals of this paper is to demonstrate the properties of functions on vector metric spaces in a context more general than the continuity in metric analysis. Hence, the properties of Riesz spaces will be the tools for the study of continuity of vector valued functions. The result we get from here is continuity in general sense, an order property rather than a topological feature.\vskip2mm

In Section 2, we consider two types of continuity on vector metric spaces. This approach distinguishes continuities vectorially and topologically. Moreover, vectorial continuity examples are given and the relationship between vectorial continuity of a function and its graph is demonstrated. In Section 3, equivalent vector metrics, vectorial isometry, vectorial homeomorphism definitions and examples related to these concepts are given. In Section 4, uniform continuity is discussed and some extension theorems for functions defined on vector metric spaces are given. Finally, in Section 5, a uniform limit theorem on a vector metric space is given, and the structure of vectorial continuous function spaces is demonstrated.\vskip2mm

\section{Topological and Vectorial Continuity}

We now introduce the concept of continuity in vector metric spaces.

\begin{definition}
Let $(X,d,E)$ and $(Y,\rho ,F)$ be vector metric spaces, and let $x\in X$.\vskip2mm
(a) A function $f:X\rightarrow Y$ is said to be topological continuous at $x$ if for every $b>0$ in $F$ there exists some $a$ in $E$ such that $\rho (f(x),f(y))<b$ whenever $y\in X$
and $d(x,y)<a$. The function $f$ is said to be topological continuous if it is topological continuous at each point of $X$.\vskip2mm
(b) A function $f:X\rightarrow Y$ is said to be vectorial continuous at $x$ if $x_{n}\overset{d,E}{\longrightarrow }x$ in $X$ implies $f(x_{n})\overset{\rho ,F}{\longrightarrow }%
f(x)$ in $Y$. The function $f$ is said
to be vectorial continuous if it is vectorial continuous at each point of $X$.
\end{definition}

\vskip2mm

\begin{theorem}
\label{1}Let $(X,d,E)$ and $(Y,\rho ,F)$ be vector metric spaces
where $F$ is Archime-dean. If a function $f:X\rightarrow Y$ is
topological continuous, then $f$ is vectorial continuous.
\end{theorem}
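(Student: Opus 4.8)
The plan is to take a sequence $(x_n)$ in $X$ with $x_n \overset{d,E}{\longrightarrow} x$ and show $f(x_n) \overset{\rho,F}{\longrightarrow} f(x)$. By definition of vectorial convergence there is a sequence $(a_n)$ in $E$ with $a_n \downarrow 0$ and $d(x_n,x) \le a_n$ for all $n$. The goal is to produce a sequence $(c_n)$ in $F$ with $c_n \downarrow 0$ and $\rho(f(x_n),f(x)) \le c_n$. Topological continuity at $x$ gives, for each $b>0$ in $F$, some $a \in E$ such that $d(x,y)<a$ forces $\rho(f(x),f(y))<b$; the subtlety is that this controls the distance only on the open "ball" of radius $a$, whereas $d(x_n,x) \le a_n$ is merely $\le a_n$, not $< a$, so I will first need a comparison step turning the order-bound hypothesis into a strict-inequality hypothesis.

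First I would set up, for each fixed $k$, an application of topological continuity with $b = k^{-1}b_0$ for a fixed $b_0>0$ in $F$ (using that $F$ is a Riesz space, pick any $b_0 \in F_+$ with $b_0 > 0$; if $F=\{0\}$ the statement is trivial). This yields $a^{(k)} \in E$ so that $d(x,y) < a^{(k)}$ implies $\rho(f(x),f(y)) < k^{-1}b_0$. Since $a_n \downarrow 0$, for each $k$ there is an index $N_k$ with $a_{N_k}$ small enough that $d(x_n,x) \le a_n \le a_{N_k}$ does not immediately give $d(x_n,x) < a^{(k)}$ — and this is exactly the gap: there is no reason an order-null sequence $(a_n)$ eventually lies strictly below a prescribed $a^{(k)}$. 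So the honest approach is different: I would instead argue directly that the set $\{\rho(f(x_n),f(x)) : n \ge m\}$ has infimum $0$ as $m \to \infty$, and build the dominating sequence from that. Concretely, define $c_m = \sup_{n \ge m} \rho(f(x_n),f(x))$ if this supremum exists; the main obstacle is that $F$ need not be Dedekind complete, so this sup may fail to exist.

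The clean route, and the one I would ultimately carry out, is to use the Archimedean hypothesis together with the following observation: because $a_n \downarrow 0$ and $F$ is Archimedean, one shows that for the element $u_n := \sup\{\rho(f(x_k),f(x)) : k \ge n\}$ — whenever it is needed — one can instead work with an explicitly constructed majorant. I would fix $b_0 > 0$ in $F$, and for each $k$ choose $a^{(k)} \in E_+$ from topological continuity at $x$ (for $b=k^{-1}b_0$), arranged so $a^{(1)} \ge a^{(2)} \ge \cdots$ by replacing $a^{(k)}$ with $a^{(1)} \wedge \cdots \wedge a^{(k)}$. Since $a_n \downarrow 0$ in $E$ and each $a^{(k)} > 0$... here lies the genuine difficulty: $a_n \downarrow 0$ does \emph{not} imply $a_n < a^{(k)}$ eventually (that would need $a^{(k)}$ to be, say, a strong order unit, or $E$ to have extra structure). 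I expect the actual proof in the paper sidesteps this by a cleverer choice — perhaps using that in the definition of vectorial convergence one may shrink $(a_n)$, or by invoking that topological continuity can be applied with $b$ ranging over a cofinal family and exploiting Archimedeanness of $F$ to conclude $\inf_k (k^{-1}b_0 + \text{error}) = 0$. Accordingly, my plan is: (1) reduce to controlling $\rho(f(x_n),f(x))$ for large $n$; (2) for each $k$, use topological continuity to find $a^{(k)}$ and then use $a_n \downarrow 0$ to get $n_k$ with $d(x_{n},x)<a^{(k)}$ for $n \ge n_k$ — \emph{assuming this is legitimate} — hence $\rho(f(x_n),f(x)) < k^{-1}b_0$ for $n \ge n_k$; (3) define $c_n = k^{-1}b_0$ for $n_k \le n < n_{k+1}$, check $c_n \downarrow 0$ using that $F$ is Archimedean (so $k^{-1}b_0 \downarrow 0$), and conclude. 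The step I expect to be the main obstacle is precisely step (2): justifying that an order-null sequence in $E$ eventually drops strictly below a prescribed positive element; I would resolve this either by an additional hypothesis I've overlooked in the definitions, or by reformulating topological continuity in terms of order-bounds rather than strict inequalities.
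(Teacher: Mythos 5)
Your proposal does not close; the step you yourself flag as the obstacle --- step (2), the claim that $a_n \downarrow 0$ in $E$ forces $d(x_n,x) < a^{(k)}$ for all large $n$ --- is a genuine gap, not a technicality. Order convergence to $0$ does not imply eventually dropping strictly below a prescribed positive element: in $E=\mathbb{R}^2$ with coordinatewise order, $(0,\tfrac1n)\downarrow 0$ but $(0,\tfrac1n)\not<(1,0)$ for every $n$. Since the proposal leaves this step as ``assuming this is legitimate,'' it does not establish the theorem, and your fallback (taking $c_m=\sup_{n\ge m}\rho(f(x_n),f(x))$) is also correctly abandoned, since $F$ need not be Dedekind complete.

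It is worth knowing that you have located precisely the soft spot in the paper's own argument. The paper applies topological continuity at $x$ with $b$ replaced by $\tfrac1n b$ to get elements $b_n\in E$, and then asserts $\rho(f(x_n),f(x))<\tfrac1n b$ ``whenever $d(x_n,x)\le a_n\wedge b_n$'' --- but that hypothesis is never verified (one only knows $d(x_n,x)\le a_n$), and even if it held, $d(x_n,x)\le b_n$ is not the strict inequality $d(x_n,x)<b_n$ that the definition of topological continuity requires. Indeed, with the definitions taken literally the implication can fail: let $X=\{x_\infty\}\cup\{x_n:n\in\mathbb{N}\}$ with the $\mathbb{R}^2$-valued metric $d(x_n,x_\infty)=(0,\tfrac1n)$ and $d(x_n,x_m)=(0,\tfrac1n+\tfrac1m)$, let $Y=F=\mathbb{R}$ with the absolute value metric, and let $f(x_n)=1$, $f(x_\infty)=0$. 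Then $a=(1,0)$ witnesses topological continuity vacuously at every point (no point other than the center lies in any ``ball'' of radius $(1,0)$), while $x_n\overset{d,E}{\longrightarrow}x_\infty$ and $(f(x_n))$ does not $F$-converge to $f(x_\infty)$. So your closing diagnosis is the right one: the statement needs either a reformulation of topological continuity in terms of order bounds rather than strict inequalities, or additional structure on $E$ (or on the admissible witnesses $a$); no rearrangement of the argument as you or the paper set it up will close the gap.
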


\begin{proof}
Suppose that $x_{n}\overset{d,E}{\longrightarrow }x$. Then there
exists a sequence $(a_{n})$ in $E$ such that $a_{n}\downarrow 0$
and $d(x_{n},x)\leq a_{n}$ for all $n$. Let $b$ be any nonzero
positive element in $F$. By topological continuity of $f$ at $x$,
there exists some $a=a(b;y)$ in $E$ such that $y\in X$ and
$d(x,y)<a$ implies $\rho(f(x),f(y))<b$. Then
there exist elements $b_{n}=b_{n}(\frac{1}{n}b;x_{n})$ in $E$ such that $%
\rho (f(x_{n}),f(x))<\frac{1}{n}b$ whenever $d(x_{n},x)\leq a_{n}\wedge
b_{n}\leq a_{n}$ for all $n$. Since $F$ is Archimedean, $\frac{1}{n}%
b\downarrow 0$. Hence, $f(x_{n})\overset{\rho ,F}{\longrightarrow }f(x)$.
\end{proof}

\vskip2mm

Vectorial continuous functions have a number of nice characterizations.

\vskip2mm

\begin{corollary}
For a function $f:X\rightarrow Y$  between two vector metric spaces $(X,d,E)$
and $(Y,\rho ,F)$ the following statements hold:\vskip2mm

(a) If $F$ is Dedekind $\sigma $-complete and $f$ is vectorial continuous, then
$\rho (f(x_{n}),f(x))\downarrow 0$ whenever $d(x_{n},x)\downarrow 0$.\vskip2mm

(b) If $E$ is Dedekind $\sigma $-complete and $\rho
(f(x_{n}),f(x))\downarrow 0$ whenever $d(x_{n},x)\downarrow 0$, then the
function $f$ is vectorial continuous.\vskip2mm

(c) Suppose that $E$ and $F$ are Dedekind $\sigma $-complete. Then, the
function $f$ is vectorial continuous if and only if $\rho
(f(x_{n}),f(x))\downarrow 0$ whenever $d(x_{n},x)\downarrow 0$.
\end{corollary}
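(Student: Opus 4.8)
The plan is to unwind the two definitions and to let Dedekind $\sigma$-completeness supply the suprema and infima needed to pass freely between the ``dominated'' description of vectorial convergence and the \emph{monotone} description recorded by the symbol $\downarrow 0$. The underlying elementary observation is: if $(z_{n})$ is a sequence of positive elements of a Dedekind $\sigma$-complete Riesz space with $z_{n}\leq c_{n}$ for some $c_{n}\downarrow 0$, then $w_{n}:=\sup_{k\geq n}z_{k}$ exists for each $n$ (the set $\{z_{k}:k\geq n\}$ is countable and bounded above by $c_{n}$), the sequence $(w_{n})$ is decreasing with $0\leq w_{n}\leq c_{n}$, hence $w_{n}\downarrow 0$, while $z_{n}\leq w_{n}$ for all $n$. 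So, in such a space, ``$z_{n}$ is dominated by a sequence $c_{n}\downarrow 0$'' and ``the tail suprema of $(z_{n})$ decrease to $0$'' are interchangeable, and this single device drives all three parts.

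For (a), assume $F$ is Dedekind $\sigma$-complete, $f$ is vectorial continuous, and $d(x_{n},x)\downarrow 0$. Then $d(x_{n},x)$ is dominated by a sequence decreasing to $0$ (by itself, or by its tail suprema), so $x_{n}\overset{d,E}{\longrightarrow }x$, and vectorial continuity yields $f(x_{n})\overset{\rho ,F}{\longrightarrow }f(x)$, i.e. $\rho (f(x_{n}),f(x))\leq c_{n}$ for some $c_{n}\downarrow 0$ in $F$. Applying the observation inside $F$ to $z_{n}=\rho (f(x_{n}),f(x))$ produces decreasing tail suprema that go down to $0$ and still dominate $\rho (f(x_{n}),f(x))$, which is the assertion $\rho (f(x_{n}),f(x))\downarrow 0$. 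Here Dedekind $\sigma$-completeness of $F$ is used precisely to form these suprema.

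For (b), assume $E$ is Dedekind $\sigma$-complete and that $\rho (f(x_{n}),f(x))\downarrow 0$ holds whenever $d(x_{n},x)\downarrow 0$. Let $x_{n}\overset{d,E}{\longrightarrow }x$, say $d(x_{n},x)\leq a_{n}\downarrow 0$ in $E$. Using the observation inside $E$ (this is where Dedekind $\sigma$-completeness of $E$ enters), the tail suprema $b_{n}:=\sup_{k\geq n}d(x_{k},x)$ exist, satisfy $b_{n}\downarrow 0$, and dominate $d(x_{n},x)$; thus the hypothesis is applicable and gives $\rho (f(x_{n}),f(x))\downarrow 0$, so in particular $\rho (f(x_{n}),f(x))$ is bounded by a sequence decreasing to $0$, that is, $f(x_{n})\overset{\rho ,F}{\longrightarrow }f(x)$. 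Hence $f$ is vectorial continuous. Part (c) is then just the conjunction of (a) and (b): with $E$ and $F$ both Dedekind $\sigma$-complete, (b) gives one implication and (a) the other.

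The only step I expect to require real care — and the only place I anticipate friction — is keeping the meaning of ``$\downarrow 0$'' consistent between hypotheses and conclusions and performing the tail-supremum passage on the correct Riesz space ($F$ in (a), $E$ in (b), which is exactly why the $\sigma$-completeness is imposed where it is); apart from that, the argument is the routine monotone/dominated bookkeeping in Riesz spaces recalled at the outset.
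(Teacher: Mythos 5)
Your proof is correct and takes essentially the same route as the paper's: in (a) use $a_{n}=d(x_{n},x)$ itself to get vectorial convergence and then invoke Dedekind $\sigma$-completeness of $F$ to turn the dominating sequence into a monotone one, in (b) do the symmetric thing in $E$, and obtain (c) as the conjunction. The only difference is that you spell out the tail-supremum construction $w_{n}=\sup_{k\geq n}z_{k}$, which the paper leaves implicit when it asserts that a positive sequence dominated by a sequence decreasing to $0$ itself satisfies ``$\downarrow 0$'' in a Dedekind $\sigma$-complete space; making that device explicit is exactly the right way to justify the paper's terser wording.
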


\begin{proof}
(a) If $d(x_{n},x)\downarrow 0$, then $x_{n}\overset{d,E}{\longrightarrow }x$. By the vectorial continuity of the function $f$, there is a sequence $(b_{n})$ in $F$ such that $b_{n}\downarrow 0$ and $\rho(f(x_{n}),f(x))\leq b_{n}$ for all $n$. Since $f$ is Dedekind $\sigma$-complete, $\rho (f(x_{n}),f(x))\downarrow 0$ holds.\vskip2mm

(b) Let $x_{n}\overset{d,E}{\longrightarrow }x$ in $X$. Then there is a sequence $(a_{n})$ in $E$ such that $a_{n}\downarrow 0$ and $d(x_{n},x)\leq a_{n}$ for all $n$. Since $E$ is Dedekind $\sigma$-complete, $d(x_{n},x)\downarrow 0$ holds. By the hypothesis, $\rho (f(x_{n}),f(x))\downarrow 0$, and so $f(x_{n})\overset{\rho,F}{\longrightarrow }f(x)$ in $Y$.\vskip2mm

(c) Proof is a consequence of (a) and (b).
\end{proof}

\vskip2mm

\begin{example}
Let $(X,d,E)$ be a vector metric space. If $x_{n}\overset{d,E}{%
\longrightarrow }x$ and $y_{n}\overset{d,E}{\longrightarrow }y$, then $d(x_{n},y_{n})%
\overset{o}{\longrightarrow }d(x,y)$, i.e, the vector metric map $d$ from $X^{2}$ to $E$ is
vectorial continuous. Here, $X^{2}$ is equipped with the $E$-valued vector metric $%
\tilde{d}$ defined as $\tilde{d}(z,w)=d(x_{1},x_{2})+d(y_{1},y_{2})$ for all $%
z=(x_{1},y_{1}),w=(x_{2},y_{2})\in X^{2}$, and $E$ is equipped with the absolute valued vector metric $\left\vert\cdot\right\vert$.
\end{example}

\vskip2mm

We recall that a subset $U$ of a vector metric space $(X,d,E)$ is called \emph{$E$-closed}
whenever $(x_{n})\subseteq U$ and $x_{n}\overset{d,E}{\longrightarrow }x$ imply $%
x\in U$.

\vskip2mm

\begin{theorem}
\label{theo3}
Let $(X,d,E)$ and $(Y,\rho ,F)$ be vector metric spaces. If a function $f:X\rightarrow Y$ is vectorial continuous, then for every $F$-closed subset $B$ of $Y$ the set $f^{-1}(B)$ is $E$-closed in $X$.
\end{theorem}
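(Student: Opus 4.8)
The plan is to argue directly from the sequential definition of $E$-closedness, mirroring the classical proof that preimages of closed sets under continuous maps are closed. Fix an $F$-closed subset $B\subseteq Y$. To show that $f^{-1}(B)$ is $E$-closed, I would take an arbitrary sequence $(x_{n})$ contained in $f^{-1}(B)$ together with a point $x\in X$ such that $x_{n}\overset{d,E}{\longrightarrow }x$, and aim to conclude that $x\in f^{-1}(B)$.

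The first step is to note that $(x_{n})\subseteq f^{-1}(B)$ means $f(x_{n})\in B$ for every $n$, so $(f(x_{n}))$ is a sequence lying entirely in $B$. The second step invokes the hypothesis: since $f$ is vectorial continuous and $x_{n}\overset{d,E}{\longrightarrow }x$, we obtain $f(x_{n})\overset{\rho ,F}{\longrightarrow }f(x)$ in $Y$. The third step applies the definition of $F$-closedness to $B$: we have a sequence $(f(x_{n}))$ in $B$ that is $F$-convergent to $f(x)$, hence $f(x)\in B$, which is precisely the statement $x\in f^{-1}(B)$. This completes the argument.

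There is no genuine obstacle here; the conclusion is a formal consequence of chaining the three definitions (preimage, vectorial continuity, $F$-closedness). The only points worth a moment's care are the trivial case $f^{-1}(B)=\emptyset$, where there is nothing to check, and the observation that no completeness or Archimedean assumption on $E$ or $F$ is needed, since the whole proof is phrased purely in terms of $E$- and $F$-convergence of sequences.
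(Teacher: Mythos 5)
Your proof is correct and follows essentially the same route as the paper's: pass a sequence in $f^{-1}(B)$ converging to $x$ through the vectorial continuity of $f$, then use the $F$-closedness of $B$ to conclude $f(x)\in B$. Your write-up is in fact slightly cleaner than the paper's, which begins somewhat awkwardly by asserting the existence of such a sequence for every $x\in f^{-1}(B)$ rather than taking an arbitrary convergent sequence from $f^{-1}(B)$.
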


\begin{proof}
For any $x\in f^{-1}(B)$, there exists a sequence $(x_{n})$ in $f^{-1}(B)$ such that $x_{n}\overset{d,E}{\longrightarrow }x$. Since the function $f$ is $(E,F)$-continuous, $f(x_{n})\overset{\rho ,F}{\longrightarrow }f(x)$. But the set $B$ is $F$-closed, so $x\in f^{-1}(B)$. Then, the set $f^{-1}(B)$ is $E$-closed.
\end{proof}

\vskip2mm

If $E$ and $F$ are two Riesz spaces, then $E\times F$ is also a Riesz space with coordinatewise ordering defined as
\begin{equation*}
(e_{1},f_{1})\leq (e_{2},f_{2})\text{  }\Longleftrightarrow \text{  }e_{1}\leq e_{2}\ \text{ and
}\ f_{1}\leq f_{2}
\end{equation*}
for all $(e_{1},f_{1}),(e_{2},f_{2})\in E\times F$. The Riesz space $E\times F$ is a vector metric space, equipped with the \emph{biabsolute valued vector metric} $\left\vert\cdot\right\vert$ defined as
\begin{equation*}
\left\vert a-b\right\vert=(\left\vert e_{1}-e_{2}\right\vert,\left\vert f_{1}-f_{2}\right\vert)
\end{equation*}
for all $a=(e_{1},f_{1})$, $b=(e_{2},f_{2})\in E\times F$.

\vskip2mm

Let $d$ and $\rho$ be two vector metrics on $X$ which are $E$-valued and $F$-valued respectively. Then the map $\delta$ defined as
\begin{equation*}
\delta(x,y)=(d(x,y),\rho(x,y))
\end{equation*}
for all $x,y\in X$ is an $E\times F$-valued vector metric on $X$. We will call $\delta$ \emph{double vector metric}.

\vskip4mm

\begin{example}
Let $(X,d,E)$ and $(X,\rho ,F)$ be vector metric spaces.\vskip2mm
(i) Suppose that $f:X\rightarrow E$ and $g:X\rightarrow F$ are
vectorial continuous functions. Then the function $h$ from $X$ to
$E\times F$ defined by $h(x)=(f(x),g(x))$ for all $x\in X$ is
vectorial continuous with the double vector metric $\delta$ and
the biabsolute valued vector metric
$\left\vert\cdot\right\vert$.\vskip2mm

(ii) Let $p_{E}:E\times F\rightarrow E$ and $p_{F}:E\times F\rightarrow F$ be the projection maps. Any function $h:X\rightarrow E\times F$ can be written as $h(x)=(f(x),g(x))$ for all $x\in X$ where $f=p_{E}\circ h$ and $g=p_{F}\circ h$. If $h$ is vectorial continuous, so are $f$ and $g$ since $p_{E}$ and $p_{F}$ are vectorial continuous.
\end{example}

\vskip2mm

Let $(X,d,E)$ and $(Y,\rho ,F)$ be vector metric spaces. Then $X\times Y$ is a vector metric space, equipped with the $E\times F$-valued \emph{product vector metric} $\pi$ defined as
\begin{equation*}
\pi(z,w)=(d(x_{1},x_{2}),\rho(y_{1},y_{2}))
\end{equation*}
for all $z=(x_{1},y_{1})$, $w=(x_{2},y_{2})\in X\times Y$.\vskip2mm

Consider the vector metric space $(X\times Y,\pi,E\times F)$. The projection maps $p_{X}$ and $p_{Y}$ defined on $X\times Y$ which are $X$-valued and $Y$-valued respectively are vectorial continuous. For any function $f$ from a vector metric space $(Z,\delta,G)$ to $(X\times Y,\pi,E\times F)$, the function $f$ is vectorial continuous if and only if both $p_{X}\circ f$ and $p_{Y}\circ f$ are vectorial continuous.

\vskip2mm

\begin{example}
Let $(X,d,E)$ and $(Y,\rho ,E)$ be vector metric spaces. Suppose
that $f:X\rightarrow E$ and $g:Y\rightarrow E$ are vectorial
continuous functions. Then the function $h$ from $X\times Y$ to
$E$ defined by $h(x,y)=\left\vert f(x)-g(y)\right\vert$ for all
$x\in X$, $y\in Y$ is vectorial continuous with the product vector
metric $\pi$ and the absolute valued vector metric
$\left\vert\cdot\right\vert$.
\end{example}

\vskip2mm

\begin{example}
Let $(X,d,E)$ and $(Y,\rho ,F)$ be vector metric spaces. Suppose
that $f:X\rightarrow E$ and $g:Y\rightarrow F$ are vectorial
continuous functions. Then the function $h$ from $X\times Y$ to
$E\times F$ defined by $h(x,y)=(f(x),g(y))$ for all $x\in X$,
$y\in Y$ is vectorial continuous with the product vector metric
$\pi$ and the biabsolute valued vector metric
$\left\vert\cdot\right\vert$.
\end{example}

\vskip2mm

The last three examples inspire the following results.

\vskip4mm

\begin{corollary}
(a) If $f:(X,d,E)\rightarrow (Y,\eta,G)$ and
$g:(X,\rho,F)\rightarrow (Z,\xi,H)$ are vectorial continuous
functions, then the function $h$ from $X$ to $Y\times Z$ defined
by $h(x)=(f(x),g(x))$ for all $x\in X$ is vectorial continuous
with the $E\times F$-valued double vector metric $\delta$ and the
$G\times H$-valued product vector metric $\pi$.\vskip2mm

(b) Let $G$ be a Riesz space. If $f:(X,d,E)\rightarrow G$ and
$g:(Y,\rho,F)\rightarrow G$ are vectorial continuous functions,
then the function $h$ from $X\times Y$ to $G$ defined by
$h(x,y)=\left\vert f(x)-g(y)\right\vert$ for all $x\in X$, $y\in
Y$ is vectorial continuous with the $E\times F$-valued product
vector metric $\pi$ and the absolute valued vector metric
$\left\vert\cdot\right\vert$.\vskip2mm

(c) If $f:(X,d,E)\rightarrow (Z,\eta,G)$ and
$g:(Y,\rho,F)\rightarrow (W,\xi,H)$ are vectorial continuous
functions, then the function $h$ from $X\times Y$ to $Z\times W$
defined by $h(x,y)=(f(x),g(y))$ for all $x\in X$, $y\in Y$ is
vectorial continuous with the $E\times F$-valued and $G\times
H$-valued product vector metrics.
\end{corollary}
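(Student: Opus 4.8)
The plan is to reduce all three parts to one observation about order convergence in a product Riesz space: if $(a_n)$ is a sequence in $E$ and $(b_n)$ a sequence in $F$, then $(a_n,b_n)\downarrow 0$ in $E\times F$ under the coordinatewise order if and only if $a_n\downarrow 0$ in $E$ and $b_n\downarrow 0$ in $F$. For the forward implication, $(a_n,b_n)$ is decreasing and lies above its infimum $(0,0)$, so $a_n\geq 0$ and $b_n\geq 0$; and if $e\leq a_n$ for every $n$ then, since $e\vee 0\leq a_n$, the pair $(e\vee 0,0)$ is a lower bound of $\{(a_n,b_n)\}$, hence $e\vee 0\leq 0$, i.e. $e\leq 0$, so $\inf a_n=0$, and symmetrically $\inf b_n=0$. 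The converse is immediate from the coordinatewise description of lower bounds. Consequently, for the double vector metric $\delta=(d,\rho)$ on $X$ one has $x_n\overset{\delta,E\times F}{\longrightarrow}x$ precisely when $x_n\overset{d,E}{\longrightarrow}x$ and $x_n\overset{\rho,F}{\longrightarrow}x$, and for a product vector metric one has $(x_n,y_n)\overset{\pi,E\times F}{\longrightarrow}(x,y)$ precisely when $x_n\overset{d,E}{\longrightarrow}x$ and $y_n\overset{\rho,F}{\longrightarrow}y$; this is the same mechanism behind the remark preceding these corollaries that a map into a product is vectorial continuous iff its two coordinates are.

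For part (a), I would take $x_n\overset{\delta,E\times F}{\longrightarrow}x$, read off $x_n\overset{d,E}{\longrightarrow}x$ and $x_n\overset{\rho,F}{\longrightarrow}x$ from the observation, apply vectorial continuity of $f$ and $g$ to obtain control sequences $u_n\downarrow 0$ in $G$ and $v_n\downarrow 0$ in $H$ with $\eta(f(x_n),f(x))\leq u_n$ and $\xi(g(x_n),g(x))\leq v_n$, and conclude that $\pi(h(x_n),h(x))=(\eta(f(x_n),f(x)),\xi(g(x_n),g(x)))\leq(u_n,v_n)\downarrow 0$ in $G\times H$, so $h(x_n)\overset{\pi,G\times H}{\longrightarrow}h(x)$. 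Part (c) is the same argument with the domain already a product: from $(x_n,y_n)\overset{\pi,E\times F}{\longrightarrow}(x,y)$ deduce $x_n\overset{d,E}{\longrightarrow}x$ and $y_n\overset{\rho,F}{\longrightarrow}y$, push these through $f$ and $g$, and repackage the two control sequences as a single $(u_n,v_n)\downarrow 0$ in $G\times H$ bounding the product vector metric of $h(x_n,y_n)$ and $h(x,y)$.

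For part (b), again deduce $x_n\overset{d,E}{\longrightarrow}x$ and $y_n\overset{\rho,F}{\longrightarrow}y$ from $(x_n,y_n)\overset{\pi,E\times F}{\longrightarrow}(x,y)$, so that $|f(x_n)-f(x)|\leq u_n\downarrow 0$ and $|g(y_n)-g(y)|\leq v_n\downarrow 0$ in $G$. Then, using the reverse triangle inequality $\bigl||p|-|q|\bigr|\leq|p-q|$ valid in every Riesz space with $p=f(x_n)-g(y_n)$ and $q=f(x)-g(y)$,
\[
\bigl|h(x_n,y_n)-h(x,y)\bigr|\leq\bigl|(f(x_n)-f(x))-(g(y_n)-g(y))\bigr|\leq|f(x_n)-f(x)|+|g(y_n)-g(y)|\leq u_n+v_n ,
\]
and $u_n+v_n\downarrow 0$ in $G$ (it is decreasing, and any lower bound $w$ satisfies $w-v_m\leq u_n$ for all $n\geq m$, hence $w\leq v_m$ for every $m$, hence $w\leq 0$). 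Therefore $h(x_n,y_n)\overset{|\cdot|,G}{\longrightarrow}h(x,y)$.

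I do not expect a genuine obstacle: all the content sits in the coordinatewise characterization of order convergence in $E\times F$, and the remainder is routine Riesz-space bookkeeping together with the three preceding examples applied almost verbatim. The one thing to watch is keeping track of which vector metric and which Riesz space is in force at each step, since the domains carry $d$ (valued in $E$) and $\rho$ (valued in $F$) while the product and double constructions live in $E\times F$ and $G\times H$; conflating these is essentially the only way the argument could fail.
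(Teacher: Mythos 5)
Your proof is correct, and it follows exactly the route the paper intends: the paper offers no proof of this corollary (it merely says the three preceding examples ``inspire'' it), and the whole content is indeed the coordinatewise characterization of $a_n\downarrow 0$ in $E\times F$, i.e.\ the unproved Proposition~\ref{prop}, plus routine bounding. You in fact supply the two lattice facts the paper leaves implicit --- that $(a_n,b_n)\downarrow 0$ in $E\times F$ iff each coordinate does, and that $u_n+v_n\downarrow 0$ when $u_n\downarrow 0$ and $v_n\downarrow 0$ --- and your use of $\bigl||p|-|q|\bigr|\leq|p-q|$ in part (b) is the standard Riesz-space inequality, so no gaps remain.
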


\vskip2mm

We have the next proposition for any product vector metric.

\vskip2mm

\begin{proposition}
\label{prop}Let $(z_{n})=(x_{n},y_{n})$ be a sequence in $(X\times Y,\pi,E\times F)$ and let $z=(x,y)\in X\times Y$. Then, \ $z_{n}\overset{\pi,E\times F}{\longrightarrow }z$ \ if and only if \ $x_{n}\overset{d,E}{\longrightarrow }x$ \ and \ $y_{n}\overset{\rho,F}{\longrightarrow }y.$
\end{proposition}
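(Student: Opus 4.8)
The plan is to unwind the definition of vectorial convergence on both sides and reduce the whole statement to one observation about the coordinatewise order on $E\times F$: a sequence $(a_{n},b_{n})$ in $E\times F$ satisfies $(a_{n},b_{n})\downarrow (0,0)$ if and only if $a_{n}\downarrow 0$ in $E$ and $b_{n}\downarrow 0$ in $F$. I would isolate this as a short preliminary observation, since it is exactly what lets the two coordinates decouple, and it holds for arbitrary Riesz spaces $E$ and $F$ (no Dedekind completeness or Archimedean hypothesis is needed).

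To justify that observation: monotonicity transfers coordinatewise directly from the definition of the order on $E\times F$, so the only issue is the infimum. If $(a_{n},b_{n})\downarrow (0,0)$, then in particular $a_{n}\geq 0$ and $b_{n}\geq 0$ for all $n$; if $p\in E$ satisfies $p\leq a_{n}$ for all $n$, then $(p,0)\leq (a_{n},b_{n})$ for all $n$, hence $(p,0)\leq (0,0)$ and so $p\leq 0$, which shows $\inf_{n}a_{n}=0$, and symmetrically $\inf_{n}b_{n}=0$. Conversely, if $a_{n}\downarrow 0$ and $b_{n}\downarrow 0$, then $(a_{n},b_{n})$ is decreasing with lower bound $(0,0)$, and any lower bound $(p,q)$ satisfies $p\leq \inf_{n}a_{n}=0$ and $q\leq \inf_{n}b_{n}=0$, i.e. $(p,q)\leq (0,0)$.

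With this in hand, the forward implication goes as follows. Assuming $z_{n}\overset{\pi,E\times F}{\longrightarrow }z$, there is a sequence $(c_{n})$ in $E\times F$ with $c_{n}\downarrow 0$ and $\pi (z_{n},z)\leq c_{n}$ for all $n$; writing $c_{n}=(a_{n},b_{n})$, the observation gives $a_{n}\downarrow 0$ and $b_{n}\downarrow 0$, while the inequality $\pi (z_{n},z)=(d(x_{n},x),\rho (y_{n},y))\leq (a_{n},b_{n})$ unpacks coordinatewise to $d(x_{n},x)\leq a_{n}$ and $\rho (y_{n},y)\leq b_{n}$, whence $x_{n}\overset{d,E}{\longrightarrow }x$ and $y_{n}\overset{\rho ,F}{\longrightarrow }y$. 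For the converse, take witnessing sequences $(a_{n})$ in $E$ and $(b_{n})$ in $F$ with $a_{n}\downarrow 0$, $d(x_{n},x)\leq a_{n}$, $b_{n}\downarrow 0$, $\rho (y_{n},y)\leq b_{n}$; then $c_{n}:=(a_{n},b_{n})\downarrow 0$ by the observation, and $\pi (z_{n},z)=(d(x_{n},x),\rho (y_{n},y))\leq c_{n}$, so $z_{n}\overset{\pi,E\times F}{\longrightarrow }z$.

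There is no real obstacle here; it is a definition chase, and the preliminary observation carries essentially all of the content. The one point to watch is to keep everything at the level of the order alone --- in particular not to slip into assuming that $d(x_{n},x)\leq a_{n}$ forces $d(x_{n},x)\downarrow 0$ --- so that the proposition retains its stated generality over arbitrary Riesz spaces $E$ and $F$.
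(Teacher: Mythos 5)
Your proof is correct; the paper in fact states this proposition without any proof, and your argument --- isolating the observation that $(a_{n},b_{n})\downarrow (0,0)$ in $E\times F$ with the coordinatewise order if and only if $a_{n}\downarrow 0$ in $E$ and $b_{n}\downarrow 0$ in $F$, and then chasing the definition of vectorial convergence in both directions --- is exactly the intended one. The verification of the infimum claim via the lower bounds $(p,0)$ and $(0,q)$ is the only point with any content, and you handle it correctly without invoking any superfluous completeness or Archimedean hypotheses.
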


\vskip2mm

Now let us give a relevance between vectorial continuity of a function and being closed of its graph.

\vskip2mm

\begin{corollary}
Let $(X,d,E)$ and $(Y,\rho ,F)$ be vector metric spaces and let $f$ be a function from $X$ to $Y$. For the graph $G_{f}$ of $f$, the following statements hold.\vskip2mm

(a) The graph $G_{f}$ is $E\times F$-closed in $(X\times Y,\pi,E\times F)$ if and only if for every sequence $(x_{n})$ with $x_{n}\overset{d,E}{\longrightarrow }x$ and $f(x_{n})\overset{\rho,F}{\longrightarrow }y$ we have $y=f(x)$.\vskip2mm

(b) If the function $f$ is vectorial continuous, then the graph $G_{f}$ is $E\times F$-closed.\vskip2mm

(c) If the function $f$ is vectorial continuous at $x_{0}\in X$, then the induced function $h:X\rightarrow G_{f}$ defined by $h(x)=(x,f(x))$ is vectorial continuous at $x_{0}\in X$.
\end{corollary}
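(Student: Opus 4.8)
The plan is to deduce all three statements from Proposition \ref{prop}, which identifies $\pi$-convergence of a sequence $(x_{n},y_{n})$ in $X\times Y$ with the two simultaneous convergences $x_{n}\overset{d,E}{\longrightarrow }x$ and $y_{n}\overset{\rho,F}{\longrightarrow }y$.

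For (a) I would merely unwind the definition of $E\times F$-closedness. A sequence lying in $G_{f}$ is precisely one of the form $(x_{n},f(x_{n}))$, and by Proposition \ref{prop} such a sequence $\pi$-converges to $(x,y)$ exactly when $x_{n}\overset{d,E}{\longrightarrow }x$ and $f(x_{n})\overset{\rho,F}{\longrightarrow }y$; moreover $(x,y)\in G_{f}$ means $y=f(x)$. Hence $G_{f}$ is $E\times F$-closed if and only if every such pair of convergences forces $y=f(x)$, which is the asserted equivalence.

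For (b) I would apply the criterion just obtained: suppose $x_{n}\overset{d,E}{\longrightarrow }x$ and $f(x_{n})\overset{\rho,F}{\longrightarrow }y$. Vectorial continuity of $f$ also gives $f(x_{n})\overset{\rho,F}{\longrightarrow }f(x)$, so it remains to see that a sequence in $Y$ has at most one vectorial limit. This is the only step beyond definition-chasing: if $\rho(f(x_{n}),f(x))\leq b_{n}\downarrow 0$ and $\rho(f(x_{n}),y)\leq b_{n}'\downarrow 0$ in $F$, then the triangle inequality (vm2) gives $\rho(f(x),y)\leq b_{n}+b_{n}'$ for all $n$, and one checks that $b_{n}+b_{n}'\downarrow 0$ in the Riesz space $F$ (using only monotonicity of the lattice operations; the Archimedean hypothesis is not needed here), whence $\rho(f(x),y)=0$ and $y=f(x)$ by (vm1). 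Part (b) then follows from (a).

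For (c), regard $G_{f}$, with the restriction of $\pi$, as a vector metric space. Given $x_{n}\overset{d,E}{\longrightarrow }x_{0}$, vectorial continuity of $f$ at $x_{0}$ yields $f(x_{n})\overset{\rho,F}{\longrightarrow }f(x_{0})$, and since trivially $x_{n}\overset{d,E}{\longrightarrow }x_{0}$, Proposition \ref{prop} gives $(x_{n},f(x_{n}))\overset{\pi,E\times F}{\longrightarrow }(x_{0},f(x_{0}))$, that is $h(x_{n})\overset{\pi,E\times F}{\longrightarrow }h(x_{0})$ in $G_{f}$; hence $h$ is vectorial continuous at $x_{0}$. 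I anticipate no obstacle here beyond invoking Proposition \ref{prop}.
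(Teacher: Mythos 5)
Your proof is correct and follows essentially the same route as the paper: all three parts are reduced to Proposition \ref{prop}, with (a) proved by unwinding the definition of $E\times F$-closedness and (b), (c) deduced from it. The paper only writes out (a) and declares (b), (c) ``similar''; your one genuine addition is to make explicit the uniqueness of vectorial limits (via $\rho(f(x),y)\leq b_{n}+b_{n}'\downarrow 0$), which is exactly the step the paper leaves implicit and which indeed needs no Archimedean hypothesis.
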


\begin{proof}
For the proof of (a), suppose that the graph $G_{f}$ is $E\times F$-closed. If $x_{n}\overset{d,E}{\longrightarrow }x$ and $f(x_{n})\overset{\rho,F}{\longrightarrow }y$, then we have $(x_{n},f(x_{n}))\overset{\pi,E\times F}{\longrightarrow }(x,y)$ by Proposition \ref{prop}. Hence $(x,y)\in G_{f}$, and so $y=f(x)$. Conversely, suppose that $(z_{n})=(x_{n},f(x_{n}))$ is a sequence in $G_{f}$ such that $z_{n}\overset{\pi,E\times F}{\longrightarrow }z=(x,y)\in X\times Y$. By Proposition \ref{prop}, $x_{n}\overset{d,E}{\longrightarrow }x$ and $f(x_{n})\overset{\rho,F}{\longrightarrow }y$. Then $y=f(x)$, and so $z\in G_{f}$.\vskip2mm
Proofs of (b) and (c) are similar to the proof of (a).
\end{proof}

\vskip2mm

\section{Fundamental Vector Valued Function Classes}

\begin{definition}
The $E$-valued vector metric $d$ and $F$-valued vector metric $\rho$ on $X$ are said to be $(E,F)$-equivalent if for any $x\in X$ and any sequence $(x_{n})$ in $X$, $$x_{n}\overset{d,E}{\longrightarrow }x \text{\ \ \ if and only if\ \ \ } x_{n}\overset{\rho,F}{\longrightarrow }x.$$
\end{definition}

\vskip2mm

\begin{lemma}
For any two $E$-valued vector metrics $d$ and $\rho$ on $X$, the
following statements are equivalent:

(a) There exist some $\alpha,\beta>0$ in $\mathbb{R}$ such that
$\alpha d(x,y)\leq\rho (x,y)\leq\beta d(x,y)$ for all $x,y\in X$.

(b) There exist two positive and $\sigma$-order continuous
operators $T$ and $S$ from $E$ to itself such that $\rho (x,y)\leq
T(d(x,y))$ and $d(x,y)\leq S(\rho (x,y))$ for all $x,y\in X$.

\end{lemma}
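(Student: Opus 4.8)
The plan is to prove the two implications separately. The passage from (a) to (b) only asks us to exhibit suitable operators, so it is essentially a formality; the implication (b) $\Rightarrow$ (a) carries the real content and is where I expect the difficulty to lie.

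For (a) $\Rightarrow$ (b): assume $\alpha,\beta>0$ in $\mathbb{R}$ satisfy $\alpha d(x,y)\leq\rho(x,y)\leq\beta d(x,y)$ for all $x,y\in X$. I would take $T:=\beta I$ and $S:=\alpha^{-1}I$, where $I$ denotes the identity operator on $E$. Multiplication by a strictly positive real is order preserving, so $T$ and $S$ are positive operators; and each is $\sigma$-order continuous, since whenever $x_{n}\overset{o}{\longrightarrow}0$, say $|x_{n}|\leq a_{n}\downarrow 0$, we get $|\lambda x_{n}|\leq\lambda a_{n}$ and $(\lambda a_{n})\downarrow\lambda\inf a_{n}=0$ for $\lambda>0$. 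The required estimates are then immediate: $\rho(x,y)\leq\beta d(x,y)=T(d(x,y))$, and multiplying $\alpha d(x,y)\leq\rho(x,y)$ by $\alpha^{-1}>0$ gives $d(x,y)\leq\alpha^{-1}\rho(x,y)=S(\rho(x,y))$. Hence (b) holds.

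For (b) $\Rightarrow$ (a): the goal is to convert the operator estimates $\rho(x,y)\leq T(d(x,y))$ and $d(x,y)\leq S(\rho(x,y))$ into \emph{uniform scalar} estimates. The first step I would take is to compose the two inequalities, obtaining $d(x,y)\leq (S\circ T)(d(x,y))$ and $\rho(x,y)\leq (T\circ S)(\rho(x,y))$ for all $x,y$, so that $S\circ T$ and $T\circ S$ each dominate the identity on the value-set of the corresponding metric. Using that $T$ and $S$, being $\sigma$-order continuous, are order bounded, I would then try to bound $T$ above by some $\beta I$ on $\{d(x,y):x,y\in X\}$ and $S$ above by some $\gamma I$ on $\{\rho(x,y):x,y\in X\}$; reading $\beta$ off the first bound and $\alpha:=\gamma^{-1}$ off the second would finish the proof. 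Here one would want to exploit the triangle inequality (vm2), which forces the values $d(x,y)$ to be tightly interrelated rather than to fill out an arbitrary subset of $E_{+}$.

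I expect precisely this last passage — from an order-bounded positive operator to a comparison with a real multiple of the identity — to be the main obstacle. On a general Riesz space a positive, even $\sigma$-order continuous, operator need not be comparable to any scalar multiple of $I$, so the argument cannot rest on order boundedness alone; closing it should require that the metric values $d(x,y)$ generate a sufficiently rich part of $E$, and I would want to examine carefully whether an additional structural hypothesis on $E$ (or on $T$ and $S$) is implicitly needed at this point.
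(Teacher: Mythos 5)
Your forward direction (a)$\Rightarrow$(b) is exactly the paper's argument: take $T=\beta I$ and $S=\alpha^{-1}I$, observe that multiplication by a positive scalar is a positive, $\sigma$-order continuous operator, and read off the two inequalities. That half is complete and correct.

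For (b)$\Rightarrow$(a) you stop at a sketch and explicitly flag that you cannot pass from a positive, order bounded operator to a comparison with a scalar multiple of the identity. That is a genuine gap in your write-up, but your diagnosis is the right one, and the paper does no better: its entire argument for this direction is the sentence that ``(a) holds since every ($\sigma$-)order continuous operator is order bounded,'' which only says that $T$ maps order intervals into order intervals and does not produce uniform scalars $\alpha,\beta$. In fact the implication fails as stated. Take $E=\mathbb{R}^{\mathbb{N}}$ with coordinatewise order, $X=\{x_{0},x_{1},x_{2},\dots\}$, $d(x_{i},x_{j})=e_{i}+e_{j}$ for $i\neq j$ (with $e_{0}:=0$ and $e_{n}$ the $n$-th unit sequence), and $\rho(x_{i},x_{j})=i\,e_{i}+j\,e_{j}$. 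Both are $E$-valued vector metrics; the diagonal operator $T(a)=(na_{n})_{n}$ is positive and ($\sigma$-)order continuous with $\rho(x,y)=T(d(x,y))$, and $S=I$ gives $d(x,y)\leq S(\rho(x,y))$; yet $\rho(x_{n},x_{0})=ne_{n}\leq\beta e_{n}=\beta d(x_{n},x_{0})$ is impossible once $n>\beta$, so (a) fails. So you have not proved (b)$\Rightarrow$(a), but no correct proof exists without strengthening the hypotheses (for instance, requiring $T$ and $S$ themselves to be dominated by scalar multiples of the identity on the cone generated by the metric values); your instinct to distrust the reduction to order boundedness is the correct reading of the situation.
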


\begin{proof}
Let $T$ and $S$ be two operators defined as $T(a)=\beta a$ and
$S(a)=\alpha^{-1}a$ for all $a\in E$. If (a) holds, then $T$ and
$S$ are positive and $\sigma$-order continuous operators, and
satisfy $\rho (x,y)\leq T(d(x,y))$ and $d(x,y)\leq S(\rho (x,y))$
for all $x,y\in X$. Conversely, (a) holds since every
($\sigma$-)order continuous operator is order bounded (\cite{AB},
1.54).
\end{proof}

\vskip2mm

Now, we give the following result for the equivalence of vector metrics.

\vskip2mm

\begin{theorem}
An $E$-valued vector metric $d$ and a $F$-valued vector metric $\rho$ on $X$ are $(E,F)$-equivalent if there exist positive and $\sigma$-order continuous two operators $T:E\rightarrow F$, $S:F\rightarrow E$ such that
\begin{equation}
\rho (x,y)\leq T(d(x,y))\text{\ \ \ and\ \ \ }d(x,y)\leq S(\rho (x,y))\label{2}
\end{equation}
 for all $x,y\in X$.
\end{theorem}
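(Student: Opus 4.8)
The plan is to verify the two directions of the equivalence definition directly from the hypotheses, using the $\sigma$-order continuity of $T$ and $S$ to transport order-null sequences between $E$ and $F$. Suppose first that $x_n\overset{d,E}{\longrightarrow}x$. Then there is a sequence $(a_n)$ in $E$ with $a_n\downarrow 0$ and $d(x_n,x)\le a_n$ for all $n$. Applying $T$, which is positive, we get $\rho(x_n,x)\le T(d(x_n,x))\le T(a_n)$; the first inequality is (\ref{2}) and the second is positivity together with $d(x_n,x)\le a_n$. Now $a_n\downarrow 0$ means $a_n\overset{o}{\longrightarrow}0$ in $E$, so $\sigma$-order continuity of $T$ gives $T(a_n)\overset{o}{\longrightarrow}0$ in $F$; that is, there is $(b_n)$ in $F$ with $b_n\downarrow 0$ and $|T(a_n)|\le b_n$. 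Since $T(a_n)\ge 0$ we have $T(a_n)\le b_n$, hence $\rho(x_n,x)\le b_n$ with $b_n\downarrow 0$, which is exactly $x_n\overset{\rho,F}{\longrightarrow}x$.

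The converse direction is symmetric: assuming $x_n\overset{\rho,F}{\longrightarrow}x$, pick $(b_n)$ in $F$ with $b_n\downarrow 0$ and $\rho(x_n,x)\le b_n$, apply the positive operator $S$ to obtain $d(x_n,x)\le S(\rho(x_n,x))\le S(b_n)$ via the second inequality in (\ref{2}), and invoke $\sigma$-order continuity of $S$ to replace $S(b_n)$ by an order-null dominating sequence $(a_n)$ in $E$, yielding $x_n\overset{d,E}{\longrightarrow}x$. Combining the two implications gives the biconditional in the definition of $(E,F)$-equivalence, so $d$ and $\rho$ are $(E,F)$-equivalent.

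I do not anticipate a genuine obstacle here; the proof is a routine chase through the definitions. The only point that requires a moment of care is the translation between the two ways of expressing order convergence — the ``$a_n\downarrow 0$ with $d(x_n,x)\le a_n$'' formulation used for vectorial convergence versus the ``$|z_n|\le c_n$, $c_n\downarrow 0$'' formulation in the definition of $o$-convergence — and the observation that for a \emph{positive} element these coincide with the bound itself, so that the dominating sequence produced by $\sigma$-order continuity can be used verbatim as the witness $(a_n)$ or $(b_n)$. One should also note explicitly that $a_n\downarrow 0$ in a Riesz space is by definition an instance of $o$-convergence to $0$, which is what lets us feed it into the hypothesis on $T$ and $S$. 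Everything else is immediate from positivity and transitivity of $\le$.
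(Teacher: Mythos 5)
Your proof is correct and complete; the paper actually states this theorem without any proof, and your argument is exactly the routine one the author evidently intended, correctly using positivity of $T$ and $S$ for monotonicity and $\sigma$-order continuity to turn $a_n\downarrow 0$ into an order-null dominating sequence in the target space. The one point worth the care you gave it — that $T(a_n)\ge 0$ lets you replace $|T(a_n)|\le b_n$ by $T(a_n)\le b_n$ so that $(b_n)$ serves directly as the witness for vectorial convergence — is handled properly.
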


\vskip2mm

\begin{example}
Suppose that the ordering of \ $\mathbb{R}^{2}$ is coordinatewise.\vskip2mm
(a) Let $d$ and $\rho$ be $\mathbb{R}$-valued and $\mathbb{R}^{2}$-valued vector metrics on $\mathbb{R}$ respectively, defined as
\begin{equation*}
d(x,y)=a\left\vert x-y\right\vert\ \ \ and\ \ \ \rho(x,y)=(b\left\vert x-y\right\vert,c\left\vert x-y\right\vert)
\end{equation*}
where $b,c\geq 0$ and $a,b+c>0$. Consider the two operators \ $T:\mathbb{R}\rightarrow \mathbb{R}^{2}\ ;\ T(x)=a^{-1}(b\:x,c\:x)$ \ and \ $S:\mathbb{R}^{2}\rightarrow \mathbb{R}\ ;\ S(x,y)=a\:b^{-1}x$ for all $x,y\in\mathbb{R}$. Then, the operators $T$ and $S$ are positive and $\sigma$-order continuous, and (\ref{2}) is satisfied. Hence, the metrics $d$ and $\rho$ are $(\mathbb{R},\mathbb{R}^{2})$-equivalent on $\mathbb{R}$.\vskip2mm
(b) Let $d$ and $\rho$ be $\mathbb{R}$-valued and $\mathbb{R}^{2}$-valued vector metrics on $\mathbb{R}^{2}$ respectively, defined as
\begin{equation*}
d(x,y)=a\left\vert x_{1}-y_{1}\right\vert+b\left\vert x_{2}-y_{2}\right\vert\ \ \ and\ \ \ \rho(x,y)=(c\left\vert x_{1}-y_{1}\right\vert,e\left\vert x_{2}-y_{2}\right\vert)
\end{equation*}
where $x=(x_{1},x_{2})$, $y=(y_{1},y_{2})$ and $a,b,c,e>0$. Let $T:\mathbb{R}\rightarrow \mathbb{R}^{2}$ and $S:\mathbb{R}^{2}\rightarrow \mathbb{R}$ be two operators defined as \ $T(x)=(c\:a^{-1}\:x,e\:b^{-1}x)$ \ and \ $S(x,y)=a\:c^{-1}x+b\:e^{-1}y$. Then, the operators $T$ and $S$ are positive and $\sigma$-order continuous. The condition (\ref{2}) is satisfied. So, the vector metrics $d$ and $\rho$ are $(\mathbb{R},\mathbb{R}^{2})$-equivalent on $\mathbb{R}^{2}$. On the other hand, if $\eta$ is another $\mathbb{R}$-valued vector metric on $\mathbb{R}^{2}$ defined as
\begin{equation*}
\eta(x,y)=\max\left\{a\left\vert x_{1}-y_{1}\right\vert,b\left\vert x_{2}-y_{2}\right\vert\right\}
\end{equation*}
where $x=(x_{1},x_{2})$, $y=(y_{1},y_{2})$ and $a,b>0$, and the operator $S$ is defined as $S(x,y)=\max\left\{a\:c^{-1}x,b\:e^{-1}y\right\}$, then the vector metrics $\eta$ and $\rho$ are $(\mathbb{R},\mathbb{R}^{2})$-equivalent on $\mathbb{R}^{2}$.
\end{example}

\vskip2mm

\begin{remark}
Vectorial continuity is invariant under equivalent vector metrics.
\end{remark}

\vskip2mm

Let us show how an isometry is defined between two vector metric spaces.

\vskip2mm

\begin{definition}
Let $(X,d,E)$ and $(Y,\rho ,F)$ be vector metric spaces. A
function $f:X\rightarrow Y$ is said to be a vector isometry if
there exists a linear operator $T_{f}:E\rightarrow F$ satisfying
the following two conditions, \vskip0mm (i) $T_{f}(d(x,y))=\rho
(f(x),f(y))$ for all $x,y\in X$,\vskip0mm
(ii) $T_{f}(a)=0$ implies $a=0$ for all $a\in E$.\\
If the function $f$ is onto, and the operator $T_{f}$ is a lattice homomorphism, then the vector metric spaces $(X,d,E)$ and $(Y,\rho ,T_{f}(E))$ are called vector isometric.
\end{definition}

\vskip2mm

\begin{remark}
A vector isometry is a vectorially distance preserving one-to-one function.
\end{remark}

\vskip2mm

\begin{example}
Let $d$ be $\mathbb{R}$-valued vector metric and let $\rho$ be $\mathbb{R}^{2}$-valued vector metric on $\mathbb{R}$ defined as
\begin{equation*}
d(x,y)=a\left\vert x-y\right\vert\ \ \ and\ \ \ \rho(x,y)=(b\left\vert x-y\right\vert,c\left\vert x-y\right\vert)
\end{equation*}
where $b,c\geq 0$ and $a,b+c>0$. Consider the identity mapping $I$ on $\mathbb{R}$ and the operator $T_{I}:\mathbb{R}\rightarrow \mathbb{R}^{2}$ defined as $T_{I}(x)=a^{-1}(bx,cx)$ for all $x\in\mathbb{R}$. Then, the identity mapping $I$ is a vector isometry. So, the vector metric spaces $(\mathbb{R},d,\mathbb{R})$ and $(\mathbb{R},\rho,\left\{(x,y):cx=by;\:x,y\in\mathbb{R}\right\})$ are vector isometric.
\end{example}

\vskip2mm

\begin{definition}
Let $(X,d,E)$ and $(Y,\rho ,F)$ be vector metric spaces. A function $f:X\rightarrow Y$ is said to be a vector homeomorphism if $f$ is a one-to-one and vectorial continuous and has a vectorial continuous inverse on $f(X)$. If the function $f$ is onto, then the vector metric spaces $X$ and $Y$ are called vector homeomorphic.
\end{definition}

\vskip2mm

\begin{remark}
A vector homeomorphism is one-to-one function that preserves vectorial convergence of sequences.
\end{remark}

\vskip2mm

By Theorem \ref{theo3}, we can develop another characterization result for vector homeomorphisms.

\vskip2mm

\begin{theorem}
An onto vector homeomorphism is a one-to-one function that preserves vector closed sets.
\end{theorem}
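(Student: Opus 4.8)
The plan is to read ``preserves vector closed sets'' as the assertion that a subset $A\subseteq X$ is $E$-closed if and only if its image $f(A)\subseteq Y$ is $F$-closed, and to obtain both implications by applying Theorem \ref{theo3} once to $f$ and once to its inverse.

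First I would make explicit the content of the hypothesis. An onto vector homeomorphism $f:(X,d,E)\to(Y,\rho,F)$ is a bijection; it is vectorial continuous, and because it is onto its inverse $f^{-1}$ is defined on all of $Y=f(X)$ and is, by the definition of vector homeomorphism, vectorial continuous as a map $(Y,\rho,F)\to(X,d,E)$. For a bijection one has the elementary set identity $(f^{-1})^{-1}(A)=f(A)$ for every $A\subseteq X$, and the preimage $f^{-1}(B)$ under $f$ coincides with the image of $B$ under the map $f^{-1}$; these identities are all I need.

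Next come the two applications of Theorem \ref{theo3}. Since $f^{-1}:(Y,\rho,F)\to(X,d,E)$ is vectorial continuous, that theorem gives, for every $E$-closed $A\subseteq X$, that the set $(f^{-1})^{-1}(A)=f(A)$ is $F$-closed in $Y$; this is the forward implication. Since $f:(X,d,E)\to(Y,\rho,F)$ is itself vectorial continuous, Theorem \ref{theo3} gives, for every $F$-closed $B\subseteq Y$, that $f^{-1}(B)$ is $E$-closed in $X$; taking $B=f(A)$ and using $f^{-1}(f(A))=A$ yields the converse implication. Combining the two gives the stated equivalence, and since $f$ is a bijection the assignment $A\mapsto f(A)$ is then a bijection between the family of $E$-closed subsets of $X$ and the family of $F$-closed subsets of $Y$.

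There is no genuine obstacle here; the single point worth a remark is where surjectivity is used. The definition of vector homeomorphism only furnishes a vectorial continuous inverse on $f(X)$, so the hypothesis ``onto'' is precisely what lets us regard $f^{-1}$ as a vectorial continuous map defined on all of $Y$ and feed it to Theorem \ref{theo3}. No Archimedean or Dedekind-completeness assumption is needed, since Theorem \ref{theo3} requires none.
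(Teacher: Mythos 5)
Your proposal is correct and its core step is exactly the paper's argument: apply Theorem \ref{theo3} to the vectorial continuous inverse $f^{-1}$, using the identity $f(A)=(f^{-1})^{-1}(A)$, to conclude that images of $E$-closed sets are $F$-closed. The extra converse implication you supply (applying Theorem \ref{theo3} to $f$ itself) is a harmless addition beyond what the paper proves, and your remark on where surjectivity is needed matches the paper's implicit use of it.
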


\begin{proof}
Let $f:X\rightarrow Y$ be an $(E,F)$-homeomorphism. Since $f$ is a one-to-one function and its inverse $f^{-1}$ is vectorial continuous, by Theorem \ref{theo3} for every $E$-closed set $A$ in $X$, $f(A)=(f^{-1})^{-1}(A)$ is $F$-closed in $Y$.
\end{proof}

\vskip2mm

The following example illustrates a relationship between vectorial equivalence and vector homeomorphism.

\begin{example}
Let $d$ and $\rho$ be two $(E,F)$-equivalent vector metrics on $X$. Then $(X,d,E)$ and $(X,\rho ,F)$ are vector homeomorphic under the identity mapping. On the other hand, if vector metric spaces $(X,d,E)$ and $(Y,\rho ,F)$ are vector homeomorphic under a function $f$, then the vector metrics $d$ and $\delta$ defined as $$\delta (x,y)=\rho (f(x),f(y))$$ for all $x,y\in X$ are $(E,F)$-equivalent vector metrics on $X$.
\end{example}

\section{Extension Theorems on Continuity}

If $X$ and $Y$ are vector metric spaces, $A\subseteq X$ and $f:A\rightarrow
Y $ is vectorial continuous, then we might ask whether there exists a vectorial
continuus extension $g$ of $f$. Below, we deal with some simple extension
techniques.

\vskip2mm

\begin{theorem}
\label{5}Let $(X,d,E)$ and $(Y,\rho ,F)$ be vector metric spaces, and let $f$ and $g$
be vectorial continuous functions from $X$ to $Y$. Then the set $\{x\in
X:f(x)=g(x)\}$ is an $E$-closed subset of $X$.
\end{theorem}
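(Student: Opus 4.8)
The plan is to show that the complement $U=\{x\in X: f(x)\neq g(x)\}$ is not needed directly; instead I would work with sequences and use the $E$-closedness criterion. Let $A=\{x\in X: f(x)=g(x)\}$. To prove $A$ is $E$-closed, take a sequence $(x_n)\subseteq A$ with $x_n\overset{d,E}{\longrightarrow}x$, and show $x\in A$, i.e.\ $f(x)=g(x)$.

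First I would invoke vectorial continuity of both $f$ and $g$ at $x$: since $x_n\overset{d,E}{\longrightarrow}x$, we get $f(x_n)\overset{\rho,F}{\longrightarrow}f(x)$ and $g(x_n)\overset{\rho,F}{\longrightarrow}g(x)$. So there are sequences $(a_n),(b_n)$ in $F$ with $a_n\downarrow 0$, $b_n\downarrow 0$, and $\rho(f(x_n),f(x))\leq a_n$, $\rho(g(x_n),g(x))\leq b_n$ for all $n$. Now for each $n$, since $x_n\in A$ we have $f(x_n)=g(x_n)$, so the triangle inequality (vm2) in $(Y,\rho,F)$ gives
\[
\rho(f(x),g(x))\leq \rho(f(x),f(x_n))+\rho(g(x),f(x_n)) = \rho(f(x),f(x_n))+\rho(g(x),g(x_n))\leq a_n+b_n.
\]
Since $a_n+b_n\downarrow 0$ in $F$ (the sum of two decreasing sequences with infimum $0$ is decreasing with infimum $0$ in any Riesz space), and $\rho(f(x),g(x))\geq 0$ is a fixed element dominated by every term of a sequence decreasing to $0$, we conclude $\rho(f(x),g(x))=0$. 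By (vm1) this forces $f(x)=g(x)$, hence $x\in A$, and $A$ is $E$-closed.

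I do not expect a serious obstacle here; the only point requiring a little care is the assertion that $a_n+b_n\downarrow 0$ and that an element bounded above by every term of such a sequence must be $\leq 0$. The former is routine: $(a_n+b_n)$ is decreasing, and if $c\leq a_n+b_n$ for all $n$ then $c - b_n \leq a_n$ for all $n$; taking infima and using $a_n\downarrow 0$, $b_n\downarrow 0$ yields $c\leq 0$; applied to $c=\rho(f(x),g(x))\geq 0$ this gives $c=0$. Everything uses only the Riesz space order structure and the vector metric axioms already in hand, so no Archimedean or Dedekind completeness hypothesis on $E$ or $F$ is needed.
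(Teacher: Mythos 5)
Your proposal is correct and follows essentially the same route as the paper: take $(x_n)$ in the agreement set converging to $x$, use vectorial continuity of $f$ and $g$ to get $\rho(f(x),g(x))\leq a_n+b_n$ with $a_n\downarrow 0$, $b_n\downarrow 0$, and conclude $f(x)=g(x)$. The only difference is that you spell out the final step ($a_n+b_n\downarrow 0$ forces $\rho(f(x),g(x))=0$), which the paper leaves implicit.
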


\begin{proof}
Let $B=\left\{x\in X:f(x)=g(x)\right\}$. Suppose $(x_{n})\subseteq B$ and $x_{n}\overset{d,E}{\longrightarrow }x$. Since $f$ and $g$ are vectorial continuous, there exist sequences $(a_{n})$ and $(b_{n})$ such that $a_{n}\downarrow 0$, $b_{n}\downarrow 0$ and $\rho (f(x_{n}),f(x))\leq a_{n}$, $\rho (g(x_{n}),g(x))\leq b_{n}$ for all $n$. Then $$\rho (f(x),g(x))\leq \rho (f(x_{n}),f(x))+\rho (f(x_{n}),g(x_{n}))+\rho (g(x_{n}),g(x))\leq a_{n}+b_{n}$$ for all $n$. So $f(x)=g(x)$, i.e $x\in B$. Hence, the set $B$ is an $E$-closed subset of $X$.
\end{proof}

\vskip2mm

The following corollary is a consequence of Theorem \ref{5}.

\vskip2mm

\begin{corollary}
Let $(X,d,E)$ and $(Y,\rho ,F)$ be vector metric spaces and let $f$ and $g$
be vectorial continuous functions from $X$ to $Y$. If the set $\{x\in
X:f(x)=g(x)\}$ is $E$-dense in $X$, then $f=g$.
\end{corollary}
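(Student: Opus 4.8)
The plan is to reduce the statement to the previous theorem, Theorem~\ref{5}. Define $B=\{x\in X : f(x)=g(x)\}$. By Theorem~\ref{5}, $B$ is an $E$-closed subset of $X$. The hypothesis says that $B$ is $E$-dense in $X$, meaning (in the natural reading consistent with the rest of the paper) that every point of $X$ is a vectorial limit of a sequence drawn from $B$, or equivalently that the only $E$-closed set containing $B$ is $X$ itself. Combining these two facts immediately forces $B=X$, and hence $f(x)=g(x)$ for all $x\in X$, i.e. $f=g$.

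Carrying this out concretely: let $x\in X$ be arbitrary. By $E$-density of $B$, there is a sequence $(x_{n})\subseteq B$ with $x_{n}\overset{d,E}{\longrightarrow}x$. Since $B$ is $E$-closed by Theorem~\ref{5}, we conclude $x\in B$, that is $f(x)=g(x)$. As $x$ was arbitrary, $f=g$ on $X$. This is essentially the only step; the corollary is a two-line consequence and requires no new machinery beyond what has already been established.

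The only point requiring care is pinning down the meaning of \emph{$E$-dense}, since the excerpt uses the term in the corollary's statement without having given a formal definition. The sequential notion of $E$-closure already in use (a set $U$ is $E$-closed if it is closed under taking vectorial limits of sequences from $U$) makes the sequential definition of $E$-dense the natural one: $B$ is $E$-dense if its sequential $E$-closure is all of $X$, equivalently every $x\in X$ admits a sequence in $B$ converging vectorially to $x$. Under this reading the argument above is airtight. If instead one wants the topological reading ($\overline{B}=X$ where the closure is with respect to the topology generated by the $E$-metric), one would need a small lemma that the sequential $E$-closure coincides with or at least contains enough of the topological closure; but since all the closedness results in the paper (Theorems~\ref{theo3} and~\ref{5}) are phrased sequentially, I expect the intended and cleanest route is the sequential one, and I would state it that way. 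No genuine obstacle arises.
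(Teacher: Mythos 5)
Your argument is exactly the intended one: the paper gives no explicit proof, noting only that the corollary is a consequence of Theorem~\ref{5}, and your reduction (every point of $X$ is a vectorial limit of a sequence from the $E$-closed set $B$, hence lies in $B$) is precisely that consequence spelled out. Your reading of $E$-dense as sequential density is the one consistent with the paper's sequential definition of $E$-closed, so the proof is complete as written.
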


\vskip2mm

\begin{definition}
Let $(X,d,E)$ and $(Y,\rho ,F)$ be vector metric spaces.\vskip2mm

(a) A function $f:X\rightarrow Y$ is said to be topological uniformly continuous on $X$ if for every $b>0$ in $F$
there exists some $a$ in $E$ such that for all $x,y\in X$, $\rho (f(x),f(y))<b$ whenever $d(x,y)<a$.\vskip2mm

(b) A function $f:X\rightarrow Y$ is said to be vectorial uniformly continuous on $X$ if for every $E$-Cauchy sequence $(x_{n})$ the sequence $f(x_{n})$ is $F$-Cauchy. \end{definition}

\vskip2mm

\begin{theorem}
\label{Theo5}Let $(X,d,E)$ and $(Y,\rho ,F)$ be vector metric spaces where $F$ is Archime-dean. If a function $f:X\rightarrow Y$ is topological uniformly continuous, then $f$ is vectorial uniformly continuous.
\end{theorem}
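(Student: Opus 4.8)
The plan is to follow the same route as the proof of Theorem~\ref{1}, with Cauchy sequences playing the role that convergent sequences played there. Let $(x_{n})$ be an $E$-Cauchy sequence in $X$. Then there is a sequence $(a_{n})$ in $E$ with $a_{n}\downarrow 0$ and $d(x_{n},x_{n+p})\leq a_{n}$ for all $n$ and $p$. The goal is to produce a sequence $(c_{n})$ in $F$ with $c_{n}\downarrow 0$ and $\rho(f(x_{n}),f(x_{n+p}))\leq c_{n}$ for all $n$ and $p$, since this is precisely the assertion that $(f(x_{n}))$ is $F$-Cauchy.

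First I would fix an arbitrary nonzero positive element $b$ of $F$. For each $n$, topological uniform continuity of $f$ applied to the positive element $\frac{1}{n}b$ of $F$ furnishes some $a_{n}'=a_{n}'(\frac{1}{n}b)$ in $E$ such that $\rho(f(u),f(v))<\frac{1}{n}b$ whenever $u,v\in X$ and $d(u,v)<a_{n}'$. Using $a_{n}\wedge a_{n}'\leq a_{n}$ as the controlling element for index $n$, exactly as in the proof of Theorem~\ref{1}, one gets $\rho(f(x_{n}),f(x_{n+p}))<\frac{1}{n}b$ for all $n$ and $p$. Since $F$ is Archimedean, $\frac{1}{n}b\downarrow 0$, so the choice $c_{n}=\frac{1}{n}b$ witnesses that $(f(x_{n}))$ is $F$-Cauchy; as $(x_{n})$ was an arbitrary $E$-Cauchy sequence, $f$ is vectorial uniformly continuous.

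The step I expect to require the most care is the passage in which the bound $d(x_{n},x_{n+p})\leq a_{n}$ coming from the $E$-Cauchy hypothesis is reconciled with the threshold $a_{n}'$ coming from uniform continuity so as to force $\rho(f(x_{n}),f(x_{n+p}))<\frac{1}{n}b$ for every $n$ and $p$ simultaneously; this is handled exactly as the corresponding step for a single limit point in Theorem~\ref{1}. The one genuinely new point over the non-uniform case is that the element $a_{n}'$ delivered by uniform continuity is independent of the base point, so a single $a_{n}'$ serves for all the pairs $(x_{n},x_{n+p})$ as $p$ ranges over the positive integers; everything else is a direct transcription of the argument already carried out for Theorem~\ref{1}.
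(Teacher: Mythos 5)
Your proposal is essentially identical to the paper's own proof: both extract the sequence $(a_{n})$ from the $E$-Cauchy hypothesis, apply topological uniform continuity to $\frac{1}{n}b$ to obtain a base-point-independent threshold, reconcile the two controls via the infimum $a_{n}\wedge b_{n}\leq a_{n}$, and conclude with $\frac{1}{n}b\downarrow 0$ from the Archimedean property. The one observation you add --- that uniformity is what lets a single threshold serve all pairs $(x_{n},x_{n+p})$ simultaneously --- is exactly the point the paper's argument relies on implicitly, so nothing further is needed.
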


\begin{proof}
Suppose that $(x_{n})$ is an $E$-Cauchy sequence. Then there exists a
sequence $(a_{n})$ in $E$ such that $a_{n}\downarrow 0$ and $d(x_{n},x_{n+p})\leq
a_{n}$ for all $n$ and $p$. Let $b$ be any nonzero positive element in $F$. By
topological uniformly continuity of $f$, there exists some $a=a(b)$ in $E$
such that for all $x,y\in X$ the inequality $d(x,y)<a$ implies $\rho (f(x),f(y))<b$ . Then
there exist elements $b_{n}=b_{n}(\frac{1}{n}b)$ in $E$ such that $%
\rho (f(x_{n}),f(x_{n+p}))<\frac{1}{n}b$ whenever $d(x_{n},x_{n+p})\leq a_{n}\wedge
b_{n}\leq a_{n}$ for all $n$ and $p$. Since $F$ is Archimedean, $\frac{1}{n}%
b\downarrow 0$. Hence, the sequence $f(x_{n})$ is $F$-Cauchy.
\end{proof}

\vskip2mm

\begin{example}
(a) For a vector isometry $f$ between two vector metric spaces $(X,d,E)$ and $(Y,\rho ,F)$, the function $f$ is vectorial uniformly continuous if \ $T_{f}$ is positive and $\sigma$-order continuous.\vskip2mm

(b) For an element $y$ in a vector metric space $(X,d,E)$, the function $f_{y}:X\rightarrow E$ defined by $f_{y}(x)=d(x,y)$ for all $x\in X$ is vectorial uniformly continuous.\vskip2mm

(c) For a subset $A$ of a vector metric space $(X,d,E)$ where $E$ is Dedekind complete, the function $f_{A}:X\rightarrow E$ defined by $f_{A}(x)=d(x,A)=\inf\left\{d(x,y):y\in A\right\}$ for all $x\in X$ is vectorial uniformly continuous.
\end{example}

\vskip2mm

The following theorem enables us to establish a extension property for the functions between vector metric spaces.

\vskip2mm

\begin{theorem}
Let $A$ be $E$-dense subset of a vector metric space $(X,d,E)$ and let $(Y,\rho ,F)$ be a $F$-complete vector metric space where $F$ is Archimedean. If $f:A\rightarrow Y$ is a topological uniformly continuous function, then $f$ has a unique vectorial continuous extension to $X$ which is also this extension is topological uniformly continuous.
\end{theorem}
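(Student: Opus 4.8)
The plan is to define the extension by $g(x)=\lim_n f(a_n)$, where $(a_n)$ is any sequence in $A$ with $a_n\overset{d,E}{\longrightarrow}x$; such a sequence exists because $A$ is $E$-dense in $X$. To see that this limit exists, note first that an $E$-convergent sequence is $E$-Cauchy: if $d(a_n,x)\le\alpha_n$ with $\alpha_n\downarrow 0$, then by (vm2) $d(a_n,a_{n+p})\le d(a_n,x)+d(a_{n+p},x)\le\alpha_n+\alpha_{n+p}\le 2\alpha_n$ and $2\alpha_n\downarrow 0$. Since $f$ is topological uniformly continuous and $F$ is Archimedean, Theorem \ref{Theo5} makes $f$ vectorial uniformly continuous, so $(f(a_n))$ is $F$-Cauchy; $F$-completeness of $Y$ then supplies the limit.

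For well-definedness, suppose $(a_n)$ and $(a_n')$ are sequences in $A$ both converging to $x$. Their interleaving $(a_1,a_1',a_2,a_2',\ldots)$ also converges to $x$, hence is $E$-Cauchy, so its $f$-image is $F$-Cauchy and therefore $F$-convergent; since an $F$-Cauchy sequence with a convergent subsequence converges to the same limit, both $(f(a_n))$ and $(f(a_n'))$ converge to that common limit, and uniqueness of vectorial limits (easily checked from (vm2)) gives $\lim_n f(a_n)=\lim_n f(a_n')$. Taking constant sequences shows $g|_A=f$, so $g$ extends $f$.

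Next I would prove that $g$ is topological uniformly continuous. Given $b>0$ in $F$, apply the topological uniform continuity of $f$ to the elements $\frac1n b$ to obtain gauges $c_n$ in $E$, which may be taken decreasing. For $x,y\in X$ whose distance $d(x,y)$ lies below a suitably chosen gauge built from the $c_n$, pick $(u_n),(v_n)\subseteq A$ with $u_n\to x$ and $v_n\to y$; then $f(u_n)\to g(x)$, $f(v_n)\to g(y)$, and, since $d$ is vectorial continuous (the first Example of Section 2), $d(u_n,v_n)\overset{o}{\longrightarrow}d(x,y)$. Combining $\rho(g(x),g(y))\le\rho(g(x),f(u_n))+\rho(f(u_n),f(v_n))+\rho(f(v_n),g(y))$ with $d(u_n,v_n)\le d(u_n,x)+d(x,y)+d(y,v_n)$ measured against the gauges $c_n$, and using that $\frac1n b\downarrow 0$ since $F$ is Archimedean, one bounds the right-hand side and concludes $\rho(g(x),g(y))<b$. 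Being topological continuous, $g$ is then vectorial continuous by Theorem \ref{1}. Finally, any two vectorial continuous extensions of $f$ agree on the $E$-dense set $A$ and hence coincide on $X$ by the corollary to Theorem \ref{5}, giving uniqueness.

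The step I expect to be the genuine obstacle is the topological-uniform-continuity estimate, which has to be run in the style of the proof of Theorem \ref{Theo5}. In an arbitrary Riesz space an order-null sequence need not eventually lie below a prescribed positive element, so the classical device ``$d(u_n,v_n)$ falls within the gauge once $n$ is large'' is unavailable; one must instead thread the order-null sequence $\frac1n b$ through the family of gauges $c_n$ and couple it with the order convergence $d(u_n,v_n)\overset{o}{\longrightarrow}d(x,y)$ so as to produce honest decreasing $F$-bounds for the three-term inequality above. A possible alternative is to establish the vectorial continuity of $g$ directly by a diagonal argument on approximating sequences, but that route runs into the same order-theoretic delicacy.
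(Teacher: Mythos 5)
Your construction of $g$ is exactly the paper's: approximate $x\in X$ by a sequence in the $E$-dense set $A$, use Theorem \ref{Theo5} to make the image sequence $F$-Cauchy, and invoke $F$-completeness to get the limit. Your interleaving argument for well-definedness is in fact more complete than the paper's, which merely asserts that the value of $g$ is independent of the approximating sequence; and your appeal to the corollary of Theorem \ref{5} for uniqueness of the extension is the right one.

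The proposal is nonetheless incomplete at exactly the point you flag: the topological uniform continuity of $g$ is never actually established. You describe a bookkeeping scheme with gauges $c_n$ attached to $\frac{1}{n}b$ and then write ``one bounds the right-hand side,'' which is precisely the step that has to be carried out. For comparison, the paper does not route this estimate through $\frac{1}{n}b$ at all: it fixes a single $a>0$ in $F$, takes the single gauge $b>0$ in $E$ furnished by the uniform continuity of $f$ on $A$, assumes $d(x,y)<b$, uses $d(x_n,y_n)\overset{o}{\longrightarrow}d(x,y)$ to ``fix $n_0$ such that $d(x_n,y_n)<b$ for all $n>n_0$,'' deduces $\rho(f(x_n),f(y_n))<a$ for $n>n_0$, and passes to the order limit to conclude $\rho(g(x),g(y))\le a$. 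Your suspicion about that device is well founded: in a Riesz space that is not totally ordered, $d(x_n,y_n)\overset{o}{\longrightarrow}d(x,y)$ with $d(x,y)<b$ gives only $d(x_n,y_n)\le d(x,y)+c_n$ with $c_n\downarrow 0$, and $\inf_n c_n=0$ does not produce an index with $c_n\le b-d(x,y)$; there is also a residual mismatch between the limit bound $\rho(g(x),g(y))\le a$ and the strict inequality demanded by the definition of topological uniform continuity. So the obstacle you identify is genuine, it is not resolved in your write-up, and it is not honestly resolved in the paper's own proof either. To close the argument you would need either to supply the missing order-theoretic step, to strengthen the hypotheses (for instance a totally ordered $E$, or uniform continuity phrased with $\le$), or to accept the weaker conclusion $\rho(g(x),g(y))\le a$.
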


\begin{proof}
Let $x\in X$. Then there exists a sequence $(x_{n})$ in $A$ such that $x_{n}\overset{d,E}{\longrightarrow }x$.  Since the function $f$ is vectorial uniformly continuous on $A$ by Theorem \ref{Theo5}, the sequence $(f(x_{n}))$ is $F$-Cauchy in $F$-complete vector metric space $Y$. Hence, there exists an element $y\in Y$ such that $f(x_{n})\overset{\rho,F}{\longrightarrow }y$. Define an extension $g$ of $f$ on $X$ by $g(x)=y$. This extension is well-defined, that is, the value of $g$ at $x$ is independent of the particular sequence $(x_{n})$ chosen $E$-convergent to $x$. We need to show that $g$ is topological uniformly continuous on $X$.\vskip2mm
Let $a>0$ in $F$. Choose $b>0$ in $E$ such that for $x,y\in A$ $d(x,y)<b$ implies $\rho (f(x),f(y))<a$. Let $x,y\in X$ satisfy $d(x,y)<b$. Choose two sequences $(x_{n})$ and $(y_{n})$ in $A$ such that $x_{n}\overset{d,E}{\longrightarrow }x$ and $y_{n}\overset{d,E}{\longrightarrow }y$. Then, $d(x_{n},y_{n})\overset{o}{\longrightarrow }d(x,y)$ in $E$. Fix $n_{0}$ such that $d(x_{n},y_{n})<b$ for all $n>n_{0}$. Then $\rho (f(x_{n}),f(y_{n}))<a$ for all $n>n_{0}$. By the vectorial uniformly continuity of $f$, $f(x_{n})$ and $f(y_{n})$ are $F$-Cauchy sequences in $Y$. Since $Y$ is $F$-complete, there exist two points $u$ and $v$ in $Y$ such that $f(x_{n})\overset{\rho,F}{\longrightarrow }u$ and $f(y_{n})\overset{\rho,F}{\longrightarrow }v$. By the definition of the function $g$, we have $g(x)=u$ and $g(y)=v$. Then $\rho(g(x_{n}),g(y_{n}))\overset{o}{\longrightarrow }\rho(g(x),g(y))$ in $F$, and therefore, $\rho (g(x),g(y))\leq b$. This shows that $g$ is topological uniformly continuous function on $X$.
\end{proof}


\section{Vectorial Continuous Function Spaces}

\begin{definition}
Let $X$ be any nonempty set and let $(Y,\rho,F)$ be a vector metric space. Then a sequence $(f_{n})$ of functions from $X$ to $Y$ is said to be uniformly $F$-convergent to a function $f:X\rightarrow Y$, if there exists a sequence $(a_{n})$ in $F$ such that $a_{n}\downarrow 0$ and $\rho(f_{n}(x),f(x))\leq a_{n}$ holds for all $x\in X$ and all $n\in \mathbb{N}$.
\end{definition}

\vskip2mm

Now we give the uniform limit theorem in vector metric spaces.

\vskip2mm

\begin{theorem}\label{7}
Let $(f_{n})$ be a sequence of vectorial continuous functions between two vector metric spaces $(X,d,E)$ and $(Y,\rho,F)$. If $(f_{n})$ is uniformly $F$-convergent to $f$, then the function $f$ is vectorial continuous.
\end{theorem}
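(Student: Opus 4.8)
The plan is to verify vectorial continuity of $f$ at an arbitrary point $x\in X$. So I fix a sequence $(x_m)$ in $X$ with $x_m\overset{d,E}{\longrightarrow}x$ (I use $m$ for the point index to avoid collision with the function index $n$ in $f_n$), and I must exhibit a sequence $(b_m)$ in $F$ with $b_m\downarrow 0$ and $\rho(f(x_m),f(x))\leq b_m$ for all $m$. The backbone is a two-fold application of (vm2):
\[
\rho(f(x_m),f(x))\ \leq\ \rho(f(x_m),f_n(x_m))+\rho(f_n(x_m),f_n(x))+\rho(f_n(x),f(x)),
\]
valid simultaneously for every $m$ and every $n$.

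Next I would bound the three terms. Uniform $F$-convergence supplies $(a_n)$ in $F$ with $a_n\downarrow 0$ and $\rho(f_n(z),f(z))\leq a_n$ for all $z\in X$ and all $n$; taking $z=x_m$ and $z=x$ controls the first and third terms by $a_n$. For the middle term I use that $f_n$ is vectorial continuous: since $x_m\overset{d,E}{\longrightarrow}x$, for each fixed $n$ there is a sequence $(c^{(n)}_m)_m$ in $F$ with $c^{(n)}_m\downarrow 0$ in $m$ and $\rho(f_n(x_m),f_n(x))\leq c^{(n)}_m$ for all $m$. Together this yields $\rho(f(x_m),f(x))\leq 2a_n+c^{(n)}_m$ for all $m,n$.

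The real work — and the step I expect to be the main obstacle — is to compress the countable family $\{(c^{(n)}_m)_m : n\in\mathbb{N}\}$ together with $(a_n)$ into one dominating sequence, without any Dedekind completeness on $F$ (so $\inf_n(2a_n+c^{(n)}_m)$ is off limits). My proposal is to set $b_m:=\bigwedge_{j=1}^{m}\bigl(2a_j+c^{(j)}_m\bigr)$, a finite infimum, hence legitimate in any Riesz space. Then I would check: (i) $(b_m)$ is decreasing, because enlarging the index range lowers a finite infimum and $c^{(j)}_{m+1}\leq c^{(j)}_m$ makes each remaining term drop; (ii) $\rho(f(x_m),f(x))\leq b_m$, which is immediate from the previous estimate; and (iii) $\inf_m b_m=0$, which is the delicate point: if $0\leq v\leq b_m$ for all $m$, then for each fixed $j$ one gets $v\leq 2a_j+c^{(j)}_m$ for every $m\geq j$, so $v\leq 2a_j$ since $\inf_m c^{(j)}_m=0$ and translation preserves infima; then $v\leq\inf_j 2a_j=2\inf_j a_j=0$ because scaling by a positive real preserves infima and $a_j\downarrow 0$, forcing $v=0$.

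Granting these three checks, $b_m\downarrow 0$ and $\rho(f(x_m),f(x))\leq b_m$, so $f(x_m)\overset{\rho,F}{\longrightarrow}f(x)$; as $x$ was arbitrary, $f$ is vectorial continuous. It is worth noting that this argument uses neither the Archimedean property nor any completeness of $E$ or $F$ — only that a Riesz space is a lattice and that positive scalar multiplication and translation commute with the (finite and existing) infima involved.
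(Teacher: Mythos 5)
Your proposal is correct, and it rests on the same skeleton as the paper's proof: the three\nobreakdash-term triangle inequality splitting $\rho(f(x_m),f(x))$ into two uniform-convergence terms and one vectorial-continuity term, giving $\rho(f(x_m),f(x))\leq 2a_n+c^{(n)}_m$ for all $m,n$. Where you genuinely diverge is in the step you correctly flag as the main obstacle. The paper simply sets $k=n$ and concludes $\rho(f(x_n),f(x))\leq 2a_n+b_n$, where $(b_n)$ is the sequence witnessing continuity of $f_k$ --- but that sequence depends on $k$, so what is really being used is the diagonal $b^{(n)}_n$, and a diagonal of a countable family of sequences, each decreasing to $0$, need not itself decrease to $0$ (already in $F=\mathbb{R}$, take $b^{(k)}_n=k/n$, whose diagonal is constantly $1$). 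Your construction $b_m=\bigwedge_{j=1}^{m}\bigl(2a_j+c^{(j)}_m\bigr)$ closes exactly this gap: the infimum is finite, hence available in any Riesz space; monotonicity follows from enlarging the index range and from $c^{(j)}_{m+1}\leq c^{(j)}_m$; the domination is immediate since the distance is a lower bound of the finite set being infimized; and $\inf_m b_m=0$ follows from the compatibility of existing infima with translation and positive scaling, with no Archimedean or completeness hypothesis. So your argument is not merely a restyling of the paper's proof but a repair of it, and the three checks (i)--(iii) you list all go through as stated.
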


\begin{proof}
Let $(x_{n})\subseteq X$ such that $x_{n}\overset{d,E}{\longrightarrow }x$ in $X$. Since $(f_{n})$ is uniformly $F$-convergent to $f$, there is a sequence $(a_{n})$ in $F$ such that $a_{n}\downarrow 0$ and $\rho(f_{n}(x),f(x))\leq a_{n}$ for all $n\in \mathbb{N}$. For each $k\in \mathbb{N}$, there is a sequence $(b_{n})$ in $F$ such that $b_{n}\downarrow 0$ and $\rho(f_{k}(x_{n}),f_{k}(x))\leq b_{n}$ for all $n\in \mathbb{N}$ by the vectorial continuity of $f_{k}$. Note that for $k=n$
$$\rho(f(x_{n}),f(x))\leq\rho(f(x_{n}),f_{n}(x_{n}))+\rho(f(x),f_{n}(x))+\rho(f_{n}(x_{n}),f_{n}(x))\leq 2a_{n}+b_{n}.$$
This implies $f(x_{n})\overset{\rho,F}{\longrightarrow }f(x)$.
\end{proof}

\vskip2mm

Let $A$ be a nonempty subset of a vector metric space $(X,d,E)$. \emph{$E$-diameter} of $A$, $d(A)$, is defined as $\sup\left\{d(x,y):x,y\in A\right\}$. The set $A$ is called \emph{$E$-bounded} if there exists an element $a>0$ in $E$ such that $d(x,y)\leq a$ for all $x,y\in A$. Every $E$-bounded subset of $X$ has an $E$-diameter whenever $E$ is Dedekind complete.

\vskip2mm

\begin{definition}
A function $f:X\rightarrow Y$ between two vector metric spaces $(X,d,E)$ and $(Y,\rho,F)$ is called vectorial bounded if $f$ maps $E$-bounded subsets of $X$ to $F$-bounded subsets of $Y$.
\end{definition}

\begin{theorem}
\label{lem}A function $f:X\rightarrow Y$ between two vector metric spaces $(X,d,E)$ and $(Y,\rho,F)$ is vectorial bounded if there exists a positive operator $T:E\rightarrow F$ such that $\rho(f(x),f(y))\leq T(d(x,y))$ for all $x,y\in X$.
\end{theorem}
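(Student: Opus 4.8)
The plan is to simply unwind both occurrences of the word ``bounded.'' Let $A\subseteq X$ be an arbitrary $E$-bounded subset; I must show that its image $f(A)$ is an $F$-bounded subset of $Y$. By the definition of $E$-boundedness there is an element $a>0$ in $E$ with $d(x,y)\le a$ for all $x,y\in A$, and the goal is to produce a single element $b>0$ in $F$ with $\rho(f(x),f(y))\le b$ for all $x,y\in A$.

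The key step is to transport the uniform bound $a$ through the operator $T$. Since $T:E\to F$ is a positive operator — hence, being linear as an operator between Riesz spaces, monotone — the inequality $d(x,y)\le a$ yields $T(d(x,y))\le T(a)$ for all $x,y\in A$. Combining this with the hypothesis $\rho(f(x),f(y))\le T(d(x,y))$ gives $\rho(f(x),f(y))\le T(a)$ for all $x,y\in A$. Putting $b=T(a)$, which satisfies $b\ge 0$ because $a\ge 0$ and $T$ is positive, we conclude that $f(A)$ is $F$-bounded. Thus the proof is essentially a two-line monotonicity argument once the definitions are spelled out.

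The only point requiring a little care — and the place where the statement as phrased is slightly delicate — is the requirement that the bounding element be \emph{strictly} positive. If $T(a)>0$ there is nothing more to do. If $T(a)=0$, then $0\le\rho(f(x),f(y))\le 0$ for all $x,y\in A$ (nonnegativity of $\rho$ being a consequence of (vm2)), so (vm1) forces $f$ to be constant on $A$; in that case $f(A)$ is a single point and any element $b>0$ of $F$ serves as a bound. Hence I expect the handling of this degenerate case $T(a)=0$ to be the only subtlety worth a remark; the rest is routine.
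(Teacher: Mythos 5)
Your argument is correct and is exactly the routine monotonicity argument the paper intends (the paper in fact states this theorem without proof): positivity of the linear operator $T$ gives $d(x,y)\le a \Rightarrow T(d(x,y))\le T(a)$, so $T(a)$ bounds $\rho(f(x),f(y))$ on the image of an $E$-bounded set. Your extra remark about the degenerate case $T(a)=0$, where one must replace $T(a)$ by some strictly positive element of $F$ to meet the letter of the definition of $F$-boundedness, is a legitimate fine point that the paper glosses over.
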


\vskip2mm

Let $C_{v}(X,F)$ and $C_{t}(X,F)$ be the collections of all vectorial continuous and topological continuous functions between a vector metric space $(X,d,E)$ and a Riesz space $F$, respectively. By Theorem \ref{1}, $C_{t}(X,F)\subseteq C_{v}(X,F)$ whenever $F$ is Archimedean.

\vskip2mm

\begin{theorem}
The spaces $C_{v}(X,F)$ and $C_{t}(X,F)$ are Riesz spaces with the natural partial ordering defined as $f\leq g$ whenever $f(x)\leq g(x)$ for all $x\in X$.
\end{theorem}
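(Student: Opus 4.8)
The plan is to exhibit $C_v(X,F)$ and $C_t(X,F)$ as Riesz \emph{subspaces} of the Riesz space $F^X$ of all functions $X\to F$ with the pointwise operations and the pointwise order (which is exactly the order in the statement). Since a linear subspace $G$ of a Riesz space which is closed under the pointwise supremum $\vee$ (equivalently, under $f\mapsto f\vee 0$) is itself a Riesz space, with lattice operations computed in $F^X$ --- any upper bound of $\{f,g\}$ lying in $G$ is a pointwise upper bound, hence dominates $f\vee g$ --- it suffices to prove that each of $C_v(X,F)$ and $C_t(X,F)$ is closed under pointwise addition, scalar multiplication, and $\vee$. Closure under $\wedge$ then follows from $f\wedge g=-\big((-f)\vee(-g)\big)$, and the order-compatibility axioms of a Riesz space are inherited pointwise from $F$.

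The verifications rest on three facts in $F$, for all $a,a',b,b'\in F$ and scalars $\lambda$: $|(a+a')-(b+b')|\le|a-b|+|a'-b'|$ (triangle inequality, twice), $|a\vee a'-b\vee b'|\le|a-b|+|a'-b'|$ (a standard Riesz-space inequality, see \cite{AB}), and $|\lambda a-\lambda b|=|\lambda|\,|a-b|$; together with: if $u_{n}\downarrow 0$ and $v_{n}\downarrow 0$ in $F$ then $u_{n}+v_{n}\downarrow 0$ and $|\lambda|u_{n}\downarrow 0$ for $\lambda\ne 0$. The first of these last two deserves a word: if $w\le u_{n}+v_{n}$ for all $n$, then for each $m$ and all $n\ge m$ one has $w-v_{m}\le u_{n}$, so $w-v_{m}\le\inf_{n}u_{n}=0$, whence $w\le\inf_{m}v_{m}=0$.

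For $C_v(X,F)$: since $F$ carries its absolute valued vector metric, a sequence in $F$ is vectorially convergent to $g$ exactly when it is $o$-convergent to $g$. Let $f,g\in C_v(X,F)$ and $x_{n}\overset{d,E}{\longrightarrow }x$; choose $u_{n}\downarrow 0$ and $v_{n}\downarrow 0$ with $|f(x_{n})-f(x)|\le u_{n}$ and $|g(x_{n})-g(x)|\le v_{n}$. The three inequalities then give $|(f+g)(x_{n})-(f+g)(x)|\le u_{n}+v_{n}$, $|(f\vee g)(x_{n})-(f\vee g)(x)|\le u_{n}+v_{n}$, and $|(\lambda f)(x_{n})-(\lambda f)(x)|\le|\lambda|u_{n}$, where $u_{n}+v_{n}\downarrow 0$ and $|\lambda|u_{n}\downarrow 0$ (the case $\lambda=0$ being trivial); hence $f+g$, $f\vee g$ and $\lambda f$ lie in $C_v(X,F)$. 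For $C_t(X,F)$: given $b>0$ in $F$, write $b=\tfrac12 b+\tfrac12 b$ and pick $a_{1},a_{2}\in E$ such that $d(x,y)<a_{i}$ implies $|f(x)-f(y)|<\tfrac12 b$, respectively $|g(x)-g(y)|<\tfrac12 b$; put $a=a_{1}\wedge a_{2}$. Since $a\le a_{i}$, the relation $d(x,y)<a$ forces $d(x,y)<a_{i}$, so the additive and the lattice inequalities give $|(f+g)(x)-(f+g)(y)|<b$ and $|(f\vee g)(x)-(f\vee g)(y)|<b$ --- the strict sign survives addition because the sum of two nonzero positive elements is nonzero. For $\lambda\ne 0$, choose $a$ with $d(x,y)<a\Rightarrow|f(x)-f(y)|<|\lambda|^{-1}b$ and use the third identity; $\lambda=0$ is trivial. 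Thus $C_t(X,F)$ is closed under the three operations as well, and the theorem follows.

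The one step that genuinely requires care is the preservation of the lattice operation in the topological case. The sharper-looking bound $|f(x)\vee g(x)-f(y)\vee g(y)|\le|f(x)-f(y)|\vee|g(x)-g(y)|$ is true but unusable here, because in a general Riesz space $u<b$ and $v<b$ do not imply $u\vee v<b$: the meet of two strictly positive elements may vanish (e.g.\ $(1,0)\wedge(0,1)=(0,0)$ in $\mathbb{R}^{2}$). Routing the estimate through $|a\vee a'-b\vee b'|\le|a-b|+|a'-b'|$ and the splitting $b=\tfrac12 b+\tfrac12 b$ sidesteps this. In the vectorial case the analogous subtlety is only that the sum of two order-null decreasing sequences is again order-null, which is the short infimum argument recorded above; everything else is routine bookkeeping.
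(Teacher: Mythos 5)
The paper states this theorem without proof, so there is nothing to compare against; judged on its own, your argument is correct and complete. Realizing $C_{v}(X,F)$ and $C_{t}(X,F)$ as Riesz subspaces of $F^{X}$ and checking closure under $+$, scalar multiplication and $\vee$ is exactly the natural route, and your two points of care are the right ones: the sum of two decreasing null sequences is again a decreasing null sequence (your infimum argument for $w\le u_{n}+v_{n}\Rightarrow w\le 0$ is sound), and in the topological case the splitting $b=\tfrac12 b+\tfrac12 b$ together with the estimate $\left\vert a\vee a'-b\vee b'\right\vert\le\left\vert a-b\right\vert+\left\vert a'-b'\right\vert$ correctly sidesteps the failure of ``$u<b$ and $v<b$ imply $u\vee v<b$'' in a general Riesz space; your verification that strictness of $<$ (meaning $\le$ and $\ne$) survives the addition of the two half-bounds is also right. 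The only cosmetic remark is that the conclusion for $C_{t}(X,F)$ silently uses that $d(x,y)<a_{1}\wedge a_{2}$ forces $d(x,y)<a_{i}$, which holds but deserves the one-line check ($d(x,y)=a_{i}$ together with $d(x,y)\le a\le a_{i}$ would force $d(x,y)=a$); this does not affect correctness.
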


\vskip2mm

Consider an $E$-bounded vector metric space $X$ and a Dedekind
complete Riesz space $F$. Let $C_{v}^{o}(X,F)$ be a subset of
$C_{v}(X,F)$ such that for any $f$ in $C_{v}^{o}(X,F)$, there is a
positive operator $T:E\rightarrow F$ satisfying $\left\vert
f(x)-f(y)\right\vert\leq T(d(x,y))$ for all $x,y\in X$. Since the
Birkhoff inequality (\cite{AB},1.9(2); \cite{Zaanen},12.4(ii))
$$\left\vert f\vee g(x)-f\vee g(y)\right\vert\leq \left\vert
f(x)-f(y)\right\vert +\left\vert g(x)-g(y)\right\vert$$ holds for
all $x,y\in X$, then the subset $C_{v}^{o}(X,F)$ is a Riesz
subspace of $C_{v}(X,F)$. By Theorem \ref{lem}, every $f\in
C_{v}^{o}(X,F)$ is vectorial bounded function. This argument gives
us the following result.

\vskip2mm

\begin{corollary}
The subset $C_{v}^{o}(X,F)$ described above is a vector metric
space equipped with the $F$-valued uniform vector metric defined
as $d_{\infty}(f,g)=\sup_{x\in X}\left\vert f(x)-g(x)\right\vert.$
\end{corollary}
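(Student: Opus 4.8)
The plan is to verify the two defining axioms (vm1) and (vm2) for the proposed map $d_{\infty}$ on $C_{v}^{o}(X,F)$, after first checking that $d_{\infty}$ is well-defined, i.e., that the supremum $\sup_{x\in X}\left\vert f(x)-g(x)\right\vert$ actually exists in $F$ for any two members $f,g$ of $C_{v}^{o}(X,F)$. For the well-definedness, I would argue as follows: given $f,g\in C_{v}^{o}(X,F)$, pick positive operators $T_{f},T_{g}:E\rightarrow F$ with $\left\vert f(x)-f(y)\right\vert\leq T_{f}(d(x,y))$ and $\left\vert g(x)-g(y)\right\vert\leq T_{g}(d(x,y))$ for all $x,y$. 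Fixing a basepoint $x_{0}\in X$ and using the triangle-type estimate $\left\vert f(x)-g(x)\right\vert\leq\left\vert f(x)-f(x_{0})\right\vert+\left\vert f(x_{0})-g(x_{0})\right\vert+\left\vert g(x_{0})-g(x)\right\vert\leq (T_{f}+T_{g})(d(x,x_{0}))+\left\vert f(x_{0})-g(x_{0})\right\vert$, together with $E$-boundedness of $X$ (so $d(x,x_{0})\leq a$ for some fixed $a>0$ in $E$, since $X$ is $E$-bounded) and positivity of $T_{f}+T_{g}$, I get that $\{\left\vert f(x)-g(x)\right\vert : x\in X\}$ is bounded above in $F$ by the single element $(T_{f}+T_{g})(a)+\left\vert f(x_{0})-g(x_{0})\right\vert$. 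Since $F$ is Dedekind complete, this nonempty bounded-above set has a supremum, so $d_{\infty}(f,g)$ is a well-defined element of $F$, and it is clearly $\geq 0$.

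Next I would check (vm1). If $f=g$ then each $\left\vert f(x)-g(x)\right\vert=0$, so the supremum is $0$; conversely, if $d_{\infty}(f,g)=0$ then $0\leq\left\vert f(x)-g(x)\right\vert\leq d_{\infty}(f,g)=0$ forces $f(x)=g(x)$ for every $x\in X$, hence $f=g$ as elements of $C_{v}^{o}(X,F)$ (which are genuine functions, not equivalence classes). For (vm2), fix $f,g,h\in C_{v}^{o}(X,F)$. For each individual $x\in X$ the pointwise triangle inequality for the absolute value in the Riesz space $F$ gives $\left\vert f(x)-g(x)\right\vert\leq\left\vert f(x)-h(x)\right\vert+\left\vert h(x)-g(x)\right\vert\leq d_{\infty}(f,h)+d_{\infty}(h,g)$. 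Thus $d_{\infty}(f,h)+d_{\infty}(h,g)$ is an upper bound for $\{\left\vert f(x)-g(x)\right\vert : x\in X\}$, and taking the supremum over $x$ yields $d_{\infty}(f,g)\leq d_{\infty}(f,h)+d_{\infty}(h,g)$. This is exactly (vm2), noting that $d_{\infty}$ is symmetric because $\left\vert f(x)-g(x)\right\vert=\left\vert g(x)-f(x)\right\vert$, so the asymmetric-looking axiom as stated in Definition 1 immediately gives the usual symmetric triangle inequality as well.

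The only genuine subtlety — and the step I expect to require the most care — is the well-definedness argument, since the whole point of restricting from $C_{v}(X,F)$ to the subspace $C_{v}^{o}(X,F)$ is precisely to guarantee that the relevant suprema exist: an arbitrary vectorial continuous $F$-valued function need not have $E$-bounded range even when $X$ is $E$-bounded, so one must use both hypotheses ($X$ is $E$-bounded and $F$ is Dedekind complete) and the defining operator-domination property of $C_{v}^{o}(X,F)$. Once well-definedness is secured, (vm1) and (vm2) are routine order manipulations using only the elementary lattice identities $\left\vert a\right\vert\geq 0$, $\left\vert a\right\vert=0\Leftrightarrow a=0$, $\left\vert a\right\vert=\left\vert -a\right\vert$, and $\left\vert a+b\right\vert\leq\left\vert a\right\vert+\left\vert b\right\vert$ in $F$, plus the definition of supremum. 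I would also remark in passing that the fact that $C_{v}^{o}(X,F)$ is closed under the lattice operations (so that $d_{\infty}$ is defined on all pairs from a genuine Riesz subspace) has already been established in the paragraph preceding the corollary via the Birkhoff inequality, so nothing further is needed there.
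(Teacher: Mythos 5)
Your proposal is correct and follows essentially the route the paper intends: the paper's ``proof'' is just the preceding paragraph, which uses Theorem \ref{lem} (the operator-domination property makes each $f\in C_{v}^{o}(X,F)$ vectorially bounded) together with the $E$-boundedness of $X$ and the Dedekind completeness of $F$ to guarantee that the supremum defining $d_{\infty}$ exists; you merely inline that argument via the basepoint estimate and then carry out the routine verification of (vm1) and (vm2).
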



\end{document}